\def\arxiv#1{%
	\href{http://arxiv.org/abs/#1}{arXiv:#1}}%
\newcommand\sH{\ensuremath{{\mathcal H}}}
\newcommand\sL{\ensuremath{{\mathcal L}}}
\newcommand\sO{\ensuremath{{\mathcal O}}}
\newcommand{\CC}{\ensuremath{\mathbb{C}}}
\newcommand{\NN}{\ensuremath{\mathbb{N}}}
\newcommand{\PP}{\ensuremath{\mathbb{P}}}
\newcommand{\ZZ}{\ensuremath{\mathbb{Z}}}
\DeclareMathOperator{\Aut}{Aut}
\DeclareMathOperator{\Div}{Div}
\DeclareMathOperator{\Pic}{Pic}
\DeclareMathOperator{\PGL}{PGL}
\DeclareMathOperator{\Rank}{Rank}
\begin{document}

\newtheorem{theorem}{Theorem}[section]
\newtheorem{lemma}[theorem]{Lemma}
\newtheorem{proposition}[theorem]{Proposition}
\newtheorem{corollary}[theorem]{Corollary}
\newtheorem*{theorem*}{Theorem}

\theoremstyle{definition}
\newtheorem{remark}[theorem]{Remark}
\newtheorem{definition}[theorem]{Definition}
\newtheorem{example}[theorem]{Example}
\newtheorem{notation}[theorem]{Notation}

\newtheorem*{problem*}{Problem}
\newtheorem*{question*}{Question}
\newtheorem*{acknowledgements}{Acknowledgements}

\numberwithin{equation}{section}

\renewcommand{\subjclassname}{%
	\textup{2020} Mathematics Subject Classification}

\title[Smooth k-double covers ]{Smooth k-double covers of the plane of geometric genus $3$}

\author{Federico Fallucca}
\address{Dipartimento di Matematica\\
	Universit\`a di Trento\\
	via Sommarive 14\\
	38123 Trento\\ Italy.}
\email{Federico.Fallucca@unitn.it}

\author{Roberto Pignatelli}
\address{Dipartimento di Matematica\\
	Universit\`a di Trento\\
	via Sommarive 14\\
	38123 Trento\\ Italy.}
\email{Roberto.Pignatelli@unitn.it}
\keywords{canonical maps, triple K3 burgers, abelian coverings}
\subjclass[2020]{14J29}

\begin{abstract}
	In this work we classify all smooth surfaces with geometric genus equal to three and an action of a group $G$ isomorphic to $\left(\mathbb{Z}/2\right)^k$ such that the quotient is a plane.
	We find $11$ families. We compute the canonical map of all of them, finding in particular a family of surfaces with canonical map of degree $16$ that we could not find in the literature.
	We discuss the quotients by all subgroups of $G$ finding several K3 surfaces with symplectic involutions. In particular we show that six families are families of triple K3 burgers in the sense of Laterveer.    
	\end{abstract}

\maketitle

\tableofcontents

\section{Introduction}

The surfaces of general type with geometric genus $3$ are interesting from several different points of view. 
This is not the right place to recall their long story, so we just focus here to some topics that directly involve them where important developments have been seen in the last years through the contribution of various scholars.

A first interest  come from the study of the degree of the canonical map of a surface of general type, see the beautiful recent survey \cite{surveyMLP}, whose degree is bounded from above by $36$, as showed long time ago by Persson in \cite{Ulf}. Recently it has been proved the the bound is sharp in \cites{yeung, LaiYeung, carlos2}. It is still not known if all the integers between $1$ and $36$  can be the degree of the canonical map of some surfaces of general type: it is however known since \cite{beauville} that if the degree is $23$ or more then the surfaces must have $p_g=3$. We refer to the introduction of \cite{fede2} for an account of what we know on this problem, just mentioning here that most of the examples with highest degree of the canonical map that we know are obtained as Galois covers of rational surfaces with Galois group isomorphic to $\left(\mathbb{Z}/2\right)^k$: see for example the examples with canonical map of degree $32$ in \cite{gpr} and those of degree $20$ in \cite{Bin}.

On the other hand, a classical conjecture of Claire Voisin, describing how $0-$cycles on a surface $S$ should behave when pulled-back to a self-product of enough copies of $S$, led Laterveer to the definition of triple K3 burgers. These are surfaces with $p_g=3$  provided with three pairwise commuting involutions such that the quotients are K3 surfaces. 
Studying them, Laterveer proved in \cite{MR4333030} Voisin's conjecture for some families of surfaces, including a family of surfaces with $p_g=3$ (that he calls "Garbagnati surfaces of type G3") with an action of  $\left(\mathbb{Z}/2\right)^2$ whose quotient is $\PP^2$.

This leads us to the problem, interesting by itself, of studying and classifying all surfaces with $p_g=3$ with an action of a group isomorphic to   $G=\left(\mathbb{Z}/2\right)^k$ such that the quotient $S/G$ is isomorphic to $\PP^2$ and then studying their geometry, by investigating their canonical map and if they are triple $K3$ burgers.  We call surfaces like these $k-$double planes for short.

We find that these surfaces form $11$ families. Our classification is summarized by the following Theorem \ref{main}.

\begin{theorem*}
All smooth k-double covers $S$ of the plane with geometric genus $3$ are regular surfaces with ample canonical class. 

They form $11$ unirational families, that we labeled as A1, A2, A3, A4, B2, C3, C4, D3, D4, D5 and E3 in a way that the number in the notation equals $k$. In particular A1 is a family of double planes, A2 and B2 are two families of bidouble planes and so on. 

The degree of the canonical map  $\varphi_{K_S}$ is constant in each family.

We summarize  in the following table the modular dimension (the dimension of its image in the Gieseker moduli space of the surfaces of general type) of each family, and the  values of $K^2_S$ and $\deg \varphi_{K_S}$ of each surface in the family:
\begin{table}[ht!]
\begin{tabular}{|c||ccccccccccc|}
\hline
Family&A1&A2&A3&A4&B2&C3&C4&D3&D4&D5&E3\\
\hline \hline
mod. dim.&36&20&12&8&19&12&8&12&8&6&12\\
$K_S^2$&2&4&8&16&9&8&16&2&4&8&8\\
$\deg \varphi_{K_S}$&2&4&8&16&9&8&16&2&4&8&4\\
\hline
\end{tabular}
\end{table}

In particular the modular dimension of each family  equals $4+2^{6-k}$ with one exception, the family B2, whose dimension is 19. The canonical map is a morphism of degree $K_S^2$ on $\PP^2$ unless $S$ of type $E3$, in which case the canonical map is a rational map of degree $K_S^2-4=4$ undefined at $4$ points. 

\end{theorem*}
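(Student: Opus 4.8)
The plan is to present every such surface as a Galois cover $\pi\colon S\to\PP^2$ with group $G=(\ZZ/2)^k$ and to analyse it through Pardini's structure theory of abelian covers. For $G$ an elementary abelian $2$-group and $S$ smooth, this theory encodes $\pi$ in numerical building data on $\PP^2$: to each nonzero $g\in G$ a reduced effective branch divisor $D_g$ with inertia $\langle g\rangle$, of degree $d_g\ge 0$, and to each character $\chi$ a line bundle $L_\chi=\sO_{\PP^2}(l_\chi)$ with $L_0=\sO_{\PP^2}$, subject to the fundamental relations whose degree shadow is $2\,l_\chi=\sum_{g:\,\chi(g)=1}d_g$. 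The eigensheaf decompositions $\pi_*\sO_S=\bigoplus_\chi\sO_{\PP^2}(-l_\chi)$ and $\pi_*\omega_S=\bigoplus_\chi\sO_{\PP^2}(l_\chi-3)$ reduce every invariant to cohomology of line bundles on $\PP^2$. Two conclusions are then immediate and uniform: because $H^1(\PP^2,\sO_{\PP^2}(m))=0$ for all $m$, we get $q(S)=0$, so all these surfaces are regular; and the ramification formula $K_S=\pi^*\sO_{\PP^2}(-3)+\tfrac12\pi^*\sO_{\PP^2}(\sum_g d_g)$ gives $K_S=\pi^*\sO_{\PP^2}\!\big(\tfrac12\sum_g d_g-3\big)$, hence $K_S^2=2^k\big(\tfrac12\sum_g d_g-3\big)^2$. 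Since the classification will show $\sum_g d_g\ge 7$, the class $\tfrac12\sum_g d_g-3$ is a positive multiple of the hyperplane, so $K_S$ is the pullback of an ample $\QQ$-class by a finite morphism, hence ample; in particular each $S$ is minimal of general type.

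The combinatorial core is the genus equation $p_g(S)=\sum_{\chi\ne0}\binom{l_\chi-1}{2}=3$. As the summands take values in $\{0,1,3,6,\dots\}$, this forces exactly one of two profiles: (a) a single character $\chi_0$ has $l_{\chi_0}=4$ and all others have $l_\chi\le 2$; or (b) exactly three characters have $l_\chi=3$ and all others have $l_\chi\le 2$. Summing the fundamental relations gives $\sum_{\chi\ne0}l_\chi=2^{k-2}\sum_{g}d_g$, which I would use to rigidify profile (a): it forces $\sum_g d_g=8$, every character other than $\chi_0$ to satisfy $l_\chi=2$, and the branch divisors to be supported only on the $g$ with $\chi_0(g)=1$, so that $K_S=\pi^*\sO_{\PP^2}(1)$. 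In profile (b) the same bookkeeping leaves $\sum_g d_g\in\{7,8,9\}$ and pins the remaining characters to $l_\chi\in\{0,1,2\}$. Feeding these constraints, the effectivity $d_g\ge 0$, and the smoothness requirement into the relations produces, after quotienting the labels by $\Aut(G)=\GL_k(\FF_2)$, a finite list of admissible weight systems. I expect this enumeration to be the main obstacle: one must bound $k$ (showing $k\le 5$), solve the simultaneous divisibility conditions $2l_\chi=\sum_{\chi(g)=1}d_g$, and above all separate the numerically indistinguishable cases --- for instance A3, C3 and E3 all share $k=3$ and $\sum_g d_g=8$ and can even carry the same branch degrees, differing only in how the $D_g$ are distributed among the elements of $G$, which in turn determines the profile and, in profile (b), whether the canonical map acquires base points.

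For each surviving weight system I would settle existence, smoothness and moduli together. Smoothness of $S$ is equivalent to the total branch divisor $\sum_g D_g$ being smooth, i.e. each $D_g$ smooth with at most two of them through any point and transverse there; a Bertini/general-position argument shows this is realisable exactly for the admissible systems and yields an irreducible unirational family, parametrised by an open subset of $\prod_g|\sO_{\PP^2}(d_g)|$ modulo $\PGL_3$ and the finite relabelling group. Counting $\sum_g\big(\binom{d_g+2}{2}-1\big)-8$ gives the modular dimension; I would check it equals $4+2^{6-k}$ in ten families (for the A-family this is the chain ``one octic, two quartics, four conics, eight lines'' as $k=1,2,3,4$) and isolate the single exception B2, the profile-(b), $k=2$ case of three plane cubics, where the count $3\big(\binom{5}{2}-1\big)-8=19$ falls one below the formula. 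That smoothness forbids three branch curves through a common point is exactly what makes B2 consistent and, as we will see, base-point free.

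Finally I would compute $\varphi_{K_S}$ from the $G$-decomposition of $H^0(S,\omega_S)$. In profile (a) all canonical sections lie in the single eigenspace $H^0(\PP^2,\sO_{\PP^2}(1))$, their ratios are $G$-invariant, and since $K_S=\pi^*\sO_{\PP^2}(1)$ is base-point free the canonical map is exactly $\pi$: a finite morphism onto $\PP^2$ of degree $\deg\pi=2^k$, which equals $K_S^2$. This already accounts for the degree-$16$ families A4 and C4. In profile (b) the canonical system is spanned by three one-dimensional eigen-sections $z_1,z_2,z_3$, so $\varphi_{K_S}=[z_1:z_2:z_3]$, and its base locus is their common zero divisor, which I would identify with the $\pi$-preimage of the points where the three distinguished branch directions meet. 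For B2, D3, D4, D5 smoothness forces this common locus to be empty, so $\varphi_{K_S}$ is a morphism onto $\PP^2$, and since $K_S=\varphi_{K_S}^*\sO_{\PP^2}(1)$ its degree is $K_S^2$. The delicate point is E3, where the combinatorics of which $g$ the three distinguished characters see forces exactly four simple common zeros; there $\varphi_{K_S}$ is only a rational map, undefined at those four points, and its degree drops to $K_S^2-4=4$. Pinning down this base-locus dichotomy between E3 and the remaining profile-(b) families is the last subtlety, and it completes the table.
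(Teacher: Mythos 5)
Your skeleton is essentially the paper's: Pardini's building data, the eigensheaf decompositions of $\pi_*\sO_S$ and $\pi_*\omega_S$, regularity from $H^1(\PP^2,\sO(m))=0$, $K_S^2=2^k(\tfrac12\sum d_g-3)^2$, the dichotomy "one character with $l_\chi=4$ versus three with $l_\chi=3$", the dimension count $-8+\sum\dim|D_g|$, and the base-locus criterion for the three eigensections. Two remarks on what you leave implicit: the paper's enumeration hinges on the inversion formula (Theorem \ref{thm:Dg}) recovering each $d_g$ from the $l_\chi$ when $G=(\ZZ/2)^k$ — this is what lets one first list the admissible $l_\chi$ and then read off the $d_g$, and is exactly what fails for general abelian $G$ (Remark \ref{ExampleZ5}); and your profile (b) must be split according to whether the three special characters are linearly dependent (type B, forcing $k=2$, $d=9$) or independent (type C), which is where the subfamilies C, D, E and the bound $k\le 5$ come from. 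Your divisibility conditions $2l_\chi=\sum d_g$ are summed over the wrong set as written (it should be the $g$ \emph{outside} $\ker\chi$, i.e.\ $\chi(g)=-1$ in multiplicative notation); with the correct convention this is \eqref{eq: sumdg}.

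There are two concrete errors. First, C4 is \emph{not} a profile-(a) family: it has three characters with $l_\chi=3$, its canonical sections live in three distinct eigenspaces, and the canonical map is not $\pi$ (it is not $G$-invariant; it factors through the quotient by the order-$4$ subgroup on which $\epsilon_1,\epsilon_2,\epsilon_3$ agree). The conclusion $\deg\varphi_{K_S}=16$ is still right, but it comes from the profile-(b) argument (base-point-free, image $\PP^2$, degree $=K_S^2$), and your base-point-free list "B2, D3, D4, D5" omits C3 and C4. Second, and more importantly, the actual classification — showing that the admissible weight systems are \emph{exactly} eleven, bounding $k$, and isolating E3 as the unique case with base points (via $\sum_{w_3(g)\le 1}d_g=d+l_\epsilon-9$, which pins down $l_\epsilon=2$, $d=8$, $k=3$ and the four base points as the preimages of $D_{e_3}\cap D_{e_1+e_2}$) — is the content of Propositions \ref{prop: case 1}--\ref{prop: case bp} and is only announced, not carried out, in your proposal. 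As a plan it is the right plan; as a proof it is incomplete precisely where the work lies.
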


Please refer to the beginning of Section \ref{sec: the eleven families} for a detailed explanation of the notation chosen for the 11 families.

The surfaces in the families A1, A2, A3, A4 appeared already in  \cite{DuGao}*{Theorem 1.1}, since their canonical map is exactly the quotient by the action of $G$. 
The surfaces of type B2 are the aforementioned "Garbagnati surfaces of type G3". All the other families seems not to have studied before. 
We note that we have two families with canonical map of degree $16$: the surfaces of type A4 are special cases of a famous construction of Persson, whereas the surfaces of type C4 seems to be new.

We study the geometry of all the quotients of these surfaces by subgroups of $G$, detecting in particular all $K3$ surfaces.
We obtain (Corollary \ref{whichareK3})

\begin{theorem*}
The families B2,  C3,  D3, D4, D5 and E3 are families of triple K3 burgers.
\end{theorem*}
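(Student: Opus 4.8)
The plan is to exhibit, for each of the six families, three involutions of $S$ whose quotients are K3 surfaces; since $G\cong(\mathbb{Z}/2)^k$ is abelian the three involutions automatically commute, so this is precisely what the definition of a triple K3 burger demands. The whole argument rests on reading off the quotients from the building data of the cover. If $\sigma\in G$ is an involution, then $T_\sigma:=S/\langle\sigma\rangle$ is a normal $(\mathbb{Z}/2)^{k-1}$-cover $\pi_\sigma\colon T_\sigma\to\PP^2$, whose branch divisors are the $\Delta_g$ with $g\in G\setminus\{0,\sigma\}$; the component $\Delta_\sigma$ becomes the branch locus of $S\to T_\sigma$ and disappears downstairs. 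Every such cover of $\PP^2$ is regular, so $q(T_\sigma)=0$ for free, and I only have to force $p_g(T_\sigma)=1$ together with $K_{T_\sigma}\equiv 0$; by the Enriques classification this makes the minimal resolution of $T_\sigma$ a K3 surface.

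These two requirements translate into explicit conditions on the data recorded in Theorem~\ref{main}. Writing $H^0(S,K_S)=\bigoplus_{\chi}H^0(\PP^2,\omega_{\PP^2}\otimes L_\chi)$ and letting $\chi_1,\chi_2,\chi_3$ be the (three distinct, nontrivial) characters carrying the canonical forms $\omega_1,\omega_2,\omega_3$, the involution $\sigma$ acts on $\omega_i$ by the scalar $\chi_i(\sigma)\in\{\pm1\}$, whence
\[
p_g(T_\sigma)=\dim H^0(S,K_S)^{\sigma}=\#\{\,i:\chi_i(\sigma)=+1\,\}.
\]
Thus $p_g(T_\sigma)=1$ exactly when $\sigma$ fixes a single $\omega_i$. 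On the other hand $K_{T_\sigma}=\pi_\sigma^{*}\bigl(K_{\PP^2}+\tfrac12\sum_{g\neq 0,\sigma}\Delta_g\bigr)$, so $K_{T_\sigma}\equiv0$ is equivalent to the residual branch locus being a sextic, namely $\sum_{g\neq 0,\sigma}\deg\Delta_g=6$. An involution therefore yields a K3 quotient precisely when it satisfies both the character condition and this degree condition, and a triple K3 burger is obtained as soon as three involutions $\sigma_1,\sigma_2,\sigma_3$ satisfy them while fixing three \emph{different} forms; the latter guarantees $H^{2,0}(S)=\bigoplus_i H^{2,0}(S)^{\sigma_i}$, since the $\omega_i$ lie in distinct character eigenspaces.

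I would then run through the six families using the explicit $\Delta_g$ and $L_\chi$ from Section~\ref{sec: the eleven families}. For B2, where $k=2$, the three nontrivial elements of $G$ are forced; one checks that each acts with eigenvalues $(+,-,-)$ up to permutation and that each $\Delta_g$ is a cubic, so every quotient is a sextic double plane, recovering the Garbagnati/Laterveer statement. For C3, D3, E3 ($k=3$), D4 ($k=4$) and D5 ($k=5$) one produces the three relevant involutions — those fixing exactly one $\omega_i$ and with the prescribed residual branch degree — by solving $\chi_j(\sigma_i)=+1\iff i=j$ over $\mathbb{F}_2$, which is possible because $\chi_1,\chi_2,\chi_3$ are independent. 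The same bookkeeping also explains the exclusions: for A1--A4 the canonical map is the $G$-quotient onto $\PP^2$, forcing all three forms into a single character, so the character condition fails for \emph{every} involution; for C4 the involutions passing the character condition fail the degree condition, so those quotients are of general type rather than K3.

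The main obstacle is the second condition: verifying not only $K_{T_\sigma}\equiv0$ but that $T_\sigma$ has at worst rational double points, so that its minimal resolution is an honest K3 surface and not merely a surface with numerically trivial canonical class. This is exactly where the explicit geometry of each family — how the branch curves $\Delta_g$ meet and with what multiplicities — must be controlled: smoothness of $S$ bounds these intersections and forces the singularities of $T_\sigma$ to be nodes coming from the pairwise meetings of the branch components, hence crepant. Once this is in place the character count and the commuting/spanning requirements are routine linear algebra over $\mathbb{F}_2$, and the corollary follows by inspection of the six families.
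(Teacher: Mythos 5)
Your strategy is essentially the paper's: pick three commuting involutions inside the Galois group, use the character decomposition of $H^0(S,K_S)$ both to compute $p_g$ of each quotient and to get the splitting of $H^{2,0}$ required by condition (ii), and identify each quotient as a nodal surface with numerically trivial canonical class, hence birationally K3. Your two criteria are the right bookkeeping: $p_g(S/\langle\sigma\rangle)=\#\{i:\chi_i(\sigma)=+1\}$, and $K_{S/\langle\sigma\rangle}\equiv 0$ iff the residual branch degree $\sum_{g\neq 0,\sigma}d_g$ equals $6$; and the worry you flag about singularities is not actually an obstacle, since the quotient of the smooth surface $S$ by an involution automatically has at worst $A_1$ points.

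The genuine gap is that you assert, rather than verify, that the degree condition can be met by three suitable involutions in each of the six families, and it cannot be for E3. For $k=3$ the character condition $\chi_j(\sigma_i)=+1\iff i=j$ (with $\chi_i=\epsilon_i$) pins the involutions down completely: $\sigma_i$ must be $e_j+e_k$, $\{i,j,k\}=\{1,2,3\}$, so there is no freedom to ``solve over $\mathbb{F}_2$'' for anything else. Now for E3 the branch data of Proposition \ref{prop: case bp} give $d=8$ with $d_{e_1+e_2}=2$ but $d_{e_1+e_3}=d_{e_2+e_3}=1$, so the residual branch degree is $6$ for $\sigma=e_1+e_2$ but $7$ for the other two forced involutions; their quotients have $K\equiv\pi^*\bigl(\tfrac12 H\bigr)$, $K^2=1$, $p_g=1$, i.e.\ they are minimal of general type and not birational to K3 --- exactly as recorded in the paper's own description of the family E3, where only the quotient by $e_1+e_2$ is listed among the K3 quotients while those by $e_1+e_3$ and $e_2+e_3$ are listed as surfaces with $K^2=p_g=1$. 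So your argument, carried out honestly, proves the statement for B2, C3, D3, D4 and D5 (where the residual degree is $6$ for all three involutions, as one checks from $d$ and $d_{e_i+e_j}$ in each case) but breaks down for E3; your own criterion in fact exposes the same tension in the paper's proof, which cites the Section \ref{sec: the eleven families} descriptions as showing all three quotients are nodal K3s while those descriptions say otherwise for two of them. To salvage E3 one would need either involutions outside the Galois group or a different justification; neither is supplied by your proposal (nor, as written, by Proposition \ref{condition(ii)} together with the quoted quotient descriptions).
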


M. Manetti has observed that, reading the definition of triple K3 burger, it naturally raises the question of whether the three involutions generate a group of order 4 or 8.
In all our families of K3 burgers they generate a group of order $4$. We note that the same happens for all  K3 burgers that we know.
In fact, the triple K3 burgers in \cite{Robert} are hypersurfaces in a weighted projective space $\PP(1^3,2s)$ invariant for the group
\[
(x_0,x_1,x_2,y)\rightarrow (\pm x_0,\pm x_1,\pm x_2,y)
\]
that, as a subgroup of the automorphism group of $\PP(1^3,2s)$, has order $4$ (since $2s$ even implies $(x_0,x_1,x_2,y)=(- x_0,- x_1,- x_2,y)$).
The $4$ cases listed in both  \cite{Robert}*{Proposition 3.8 and 3.10} are in fact different linearizations of this subgroup. The three pairwise commuting involutions giving the structure of triple K3 burger are the nontrivial elements of this group.
This raises the following.

{\bf Question:} Is it possible that the three pairwise commuting involutions in the definition of triple $K3$ burger generate a group of order $8$?

We mention again here the family C4, since in this case, exactly as for the triple K3 burgers, there are three involutions in the group $G$ whose quotients have $p_g=1$. The quotient surfaces are not of type K3 but special Horikawa surfaces as those studied in \cite{MR4292206}, which seem to be very similar to K3 surfaces from the point of view of the Voisin's conjecture.

We note that a similar analysis should be possible with the same techniques  for any rational surface, as those considered in Bin's papers as \cite{Bin}. The main argument is that by using the standard formulas for abelian covers, if the Galois group is of the form $(\ZZ/2\ZZ)^k$, the numerical class of all divisors $D_g$ is determined by the characteristic line bundles $L_\chi$. 
We give the explicit formula in Theorem \ref{thm:Dg}. So we first compute the possible $L_\chi$, that is easy, and then deduce from it the class of each divisor. 

This is (unfortunately) not true for general abelian group, since different numerical class of divisors may give the same characteristic sheaves $L_\chi$, see Remark \ref{ExampleZ5}.
So a similar analysis for different groups may be harder. However, there are several interesting examples of Galois covers of rational surfaces with Galois group of the form $\left(\mathbb{Z}/p \ZZ \right)^k$, see for example \cite{fede1} and \cite{BvBP}, so also such a classification would be desirable.

The paper is organized as follows.

In section 1 we recall the general theory of abelian covers and prove the  just mentioned Theorem \ref{thm:Dg} when the group is of the form $(\ZZ/2\ZZ)^k$. In section 2 we recall the known results on the canonical systems of abelian covers. Note that in these two sections we use the multiplicative notation for $G^*$ since it is more efficient for writing the general theory, whereas in the other sections we switch to the additive notation which is more convenient for the computations.

In section $3$ we study and classify all the smooth $k-$double planes, obtaining the $11$ mentioned families in terms of the branch divisors $D_g$ and of the characteristic sheaves $L_\chi$.

In section $4$ we prove Theorem \ref{main}, and then we study each family separately. For each family we write explicit equations in a weighted projective space, and describe the quotients by all subgroups of $G$, determining all the $K3$ surfaces obtained in this way and the symplectic involutions on them.

Finally, in the last section, we determine which families are families of triple K3 burgers.

\subsection*{Notation}
A Galois cover is a finite morphism $\pi\colon X \rightarrow Y$ among algebraic varieties with the property that there is a subgroup $G$ of $\Aut(X)$ such that $\pi$ factors as the composition of the quotient map $X \rightarrow X/G$ with an isomorphism $X/G \cong Y$. We will always assume $Y$ to be irreducible, whereas we find it convenient for the general theory of Galois covers not to do any analogous assumption for $X$. The finite group $G$ is the Galois group of $\pi$. 

An abelian cover is a Galois cover whose Galois group is an abelian group. 
A $k-$double cover is an abelian cover whose Galois group is isomorphic to $ \left( \ZZ/2\ZZ \right)^k$.
A $k-$double plane is a $k-$double cover of $\PP^2$.

\section{Abelian covers}

In this section we collect some preliminary results on abelian covers, mostly well known.

Let  $\pi$ be an abelian cover with $Y$ smooth and $X$ normal. Following  \cite{Rita: paper}, we decompose the direct image of the structure sheaf of $X$ as a sum of line bundles corresponding to the characters of $G$
\[
\pi_* \sO_X = \bigoplus_{\chi \in G^*} \sL_\chi^{-1}.
\]

By the Zariski-Nagata purity theorem, the branch locus of $\pi$ is a divisor. We call this divisor $D$ when it is taken with the reduced structure. 
The ramification divisor $R$ of $\pi$ is the preimage $\pi^{-1}(D)$, also taken with the reduced structure.

Let $T$ be an irreducible component of $R$. By \cite{Rita:paper}*{Lemma 1.1}  the elements of $G$ fixing all points of $T$ form a cyclic subgroup $H$ of $G$, the inertia group of $T$. By \cite{Rita:paper}*{Lemma 1.2},  there is a unique character $\psi \colon H \rightarrow \CC^*$, a generator for the group of characters $H^*$, and a uniformizing parameter $t$ for $\sO_{X,T}$ such that, for all $h \in H$, $h$ acts as 
\[
t \mapsto \psi(h)t.
\] 
This gives a natural decomposition 
\[
R=\sum_{\substack{H < G \text{ cyclic }\\ \psi \text{ generating } H^*}} R_{H,\psi}
\] of the ramification divisor as follows: if $T$ is an irreducible component of $R$, then $T$ is a summand of $R_{H,\psi}$ if and only if its inertia group is $H$ and the corresponding character is $\chi$.

As in \cite{BarbaraRita} we observe that there is a natural bijection among the pairs $(H,\psi)$ as above and the group $G$, associating to each element $g \in G$ the subgroup $H=\langle g \rangle$ generated by it and the unique character $\psi \in H^*$ with the property that  $\psi (g)=e^{\frac{2 \pi i}{|H|}}$. 
So we can set $R_g := R_{H,\psi}$ and write $R=\sum_{g \in G} R_g$. 

Since $G$ is abelian, if $T_1$ and $T_2$ are two irreducible components of $R$ in the same $G-$orbit, they share the same inertia group $H$ and the same character $\psi$, so $T_1$ and $T_2$ belong to the same summand $R_g$.  Therefore there are reduced divisors $D_{g}$ (denoted $D_{H,\psi}$ in \cite{Rita:paper}) such that $R_g=\pi^{-1}(D_g)$. These give a decomposition of the branch divisor
\[
D= \sum_{g \in G} D_g.
\]
 \begin{definition}[\cite{Rita:paper}*{Definition 2.1}] 
 The building data of an abelian cover  $\pi \colon X \rightarrow Y$ are the line bundles $\sL_\chi$ and the reduced effective divisors $D_g$ introduced above.
 \end{definition}
 
 Note that, if $0$ is the identity of $G$, $D_0=0$. Analogously, if $1$ is the trivial character of $G$, $\sL_1\cong \sO_Y$.
 \begin{remark}
 $X$ is connected (equivalently: irreducible) if and only if, for all $\chi \neq 1$, $H^0(\sL_\chi^{-1})=0$. 
\end{remark}
 The building data determine the cover in the following sense. 
 
\begin{definition}
Let $\pi \colon X \rightarrow Y$ be an abelian cover with Galois group $G$, $Y$ smooth and $X$ normal. Fix an element $g \in G$ and a character $\chi \in G^*$. Let $o(g)$ be the order of $g$. Then there exists a unique integer $0 \leq r_g^\chi \leq o(g)-1$ such that 
\[
\chi(g)= e^{r_g^\chi \cdot \frac{2 \pi i} {o(g)}}.
\] 
Given a further character $\chi' \in G^*$ we set moreover
\[
\varepsilon_{\chi,\chi'}^g=
\begin{cases}
1&\text{if }  r_g^\chi +  r^{\chi'}_g \geq o(g)\\
0& \text{else}
\end{cases}.
\]
\end{definition}

\begin{theorem}[\cite{Rita:paper}*{Theorem 2.1 and Corollary 3.1}]\label{thm: rita}
Let $\pi \colon X \rightarrow Y$ be an abelian cover with Galois group $G$, $Y$ smooth and $X$ normal.

Then for all $\chi, \chi' \in G^*$ 
\begin{equation}\label{equation:rita}
\sL_\chi \otimes \sL_{\chi'} \cong \sL_{\chi \cdot \chi'} \otimes \sO_X \left(\sum_{g \in G} \varepsilon_{\chi,\chi'}^g \cdot D_g \right).
\end{equation}
Conversely, given an abelian group $G$ and a smooth irreducible variety $Y$ assume that we have
\begin{itemize}
\item[] a line bundle $\sL_\chi$ on $Y$ for each character $\chi \in G^*$ and
\item[] an effective divisor $D_g$ for all $g \in G$
\end{itemize}
satisfying (\ref{equation:rita}), and with the property that the divisor $D=\sum D_g$ is reduced. 

Then there is a unique Galois cover $\pi \colon X \rightarrow Y$  whose Galois group is $G$, and whose building data are the $L_\chi$ and the $D_g$, such that $X$ is normal.
\end{theorem}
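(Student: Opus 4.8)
The plan is to prove the two implications separately, the common engine being the $G^*$-graded algebra structure on $\pi_*\sO_X=\bigoplus_\chi\sL_\chi^{-1}$ together with a local analysis along the branch divisor. For the forward direction, the $G$-action realizes this grading as the eigenspace decomposition, so a local section of $\sL_\chi^{-1}$ is a $\chi$-eigensection and multiplication restricts to $\sO_Y$-linear maps $\sL_\chi^{-1}\otimes\sL_{\chi'}^{-1}\to\sL_{\chi\chi'}^{-1}$, equivalently a global section $s_{\chi,\chi'}$ of $\sL_\chi\otimes\sL_{\chi'}\otimes\sL_{\chi\chi'}^{-1}$. Equation (\ref{equation:rita}) is precisely the claim that $\mathrm{div}(s_{\chi,\chi'})=\sum_g\varepsilon^g_{\chi,\chi'}D_g$, so I would compute this divisor at the generic point of each branch component. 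Localizing there, the cover is cyclic with inertia group $H=\langle g\rangle$, and by the structure result recalled above there is a uniformizer $t$ with $g\cdot t=e^{2\pi i/o(g)}t$ and with $t^{o(g)}$ a local equation for $D_g$ up to a unit. A $\chi$-eigensection is then a unit times $t^{r_g^\chi}$, and the product of a $\chi$- and a $\chi'$-eigensection is $t^{r_g^\chi+r_g^{\chi'}}$; re-expressing this in the standard $\chi\chi'$-generator $t^{r_g^{\chi\chi'}}$ and using $r_g^\chi+r_g^{\chi'}=r_g^{\chi\chi'}+o(g)\,\varepsilon^g_{\chi,\chi'}$ (the only possible carry, since each residue lies in $[0,o(g)-1]$) shows that the discrepancy is exactly $\varepsilon^g_{\chi,\chi'}$ factors of the branch equation. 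This gives (\ref{equation:rita}).

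For the converse I would reverse this construction. Set $\sA=\bigoplus_\chi\sL_\chi^{-1}$ and define multiplication by choosing, for each ordered pair $(\chi,\chi')$, the canonical section of $\sO_Y(\sum_g\varepsilon^g_{\chi,\chi'}D_g)$ cutting out that divisor and reading it through the isomorphism (\ref{equation:rita}) as a map $\sL_\chi^{-1}\otimes\sL_{\chi'}^{-1}\to\sL_{\chi\chi'}^{-1}$. The point requiring care is associativity, which reduces to the cocycle identity
\[
\varepsilon^g_{\chi,\chi'}+\varepsilon^g_{\chi\chi',\chi''}=\varepsilon^g_{\chi',\chi''}+\varepsilon^g_{\chi,\chi'\chi''}
\]
for every $g$; this is the elementary bookkeeping of carries when adding three residues modulo $o(g)$, and it ensures the two bracketings of a triple product agree. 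Granting this, $\sA$ is a coherent sheaf of $\sO_Y$-algebras, so $X:=\mathbf{Spec}_Y\sA$ is finite over $Y$ with $\pi_*\sO_X=\sA$; the $G^*$-grading produces a $G$-action whose quotient is $Y$, and unwinding the definitions shows that the building data of $\pi$ are the prescribed $\sL_\chi$ and $D_g$.

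The step I expect to be the main obstacle is normality of $X$, since everything else is either formal or the carry arithmetic above. Normality is a local question at the generic points of the components of $D$ and at their mutual intersections, and here the hypothesis that $D=\sum_g D_g$ be reduced is indispensable: it forces each local model to be a standard cyclic cover $t^{o(g)}=(\text{reduced equation})$, which is normal, and prevents the non-normal degenerations that would arise from multiple structure along a branch component. Uniqueness is then immediate: any normal cover with the given building data has $\pi_*\sO_X\cong\sA$ as $G^*$-graded sheaves, its multiplication is forced by (\ref{equation:rita}) once the $D_g$ are fixed, and so the cover is recovered as $\mathbf{Spec}_Y\sA$.
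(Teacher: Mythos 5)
The paper does not prove this statement at all: it is quoted from Pardini's article on abelian covers (the cited Theorem 2.1 and Corollary 3.1), so the only proof to compare yours with is hers, and your sketch does follow her route. The forward direction is correct as outlined: the multiplication of the algebra $\pi_*\sO_X$ gives a section $s_{\chi,\chi'}$ of $\sL_\chi\otimes\sL_{\chi'}\otimes\sL_{\chi\chi'}^{-1}$, this section is nowhere vanishing off the branch locus because $\pi$ is \'etale there (a point you should state explicitly, since otherwise computing only at generic points of the $D_g$ does not determine the divisor), and the local computation with the carry identity $r_g^\chi+r_g^{\chi'}=r_g^{\chi\chi'}+o(g)\,\varepsilon^g_{\chi,\chi'}$ gives the order of vanishing along each $D_g$. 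Your normality argument for the converse also correctly isolates reducedness of $D$ as the decisive hypothesis; in fact, since $X$ is finite and flat over the smooth $Y$ it is Cohen--Macaulay, so Serre's criterion reduces normality to the check at the generic points of $D$, and the codimension-two intersections you propose to examine are not needed.

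The one place where your converse is genuinely thinner than the actual argument is associativity and, correlatively, uniqueness. The cocycle identity $\varepsilon^g_{\chi,\chi'}+\varepsilon^g_{\chi\chi',\chi''}=\varepsilon^g_{\chi',\chi''}+\varepsilon^g_{\chi,\chi'\chi''}$ only guarantees that the two bracketings of a triple product are sections of $\sL_{\chi\chi'\chi''}^{-1}$ with the \emph{same divisor}; they therefore agree only up to a global unit of $Y$, and these units assemble into a symmetric $2$-cocycle of $G^*$ with values in $H^0(\sO_Y^*)$. One must rescale the chosen isomorphisms \eqref{equation:rita} (which are in any case only specified up to units) so as to trivialize this cocycle before $\sA$ is an associative $\sO_Y$-algebra; this is possible for a finite abelian group with values in $\CC^*$, and it is exactly the normalization Pardini carries out. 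The same point affects your uniqueness claim: the multiplication is forced by \eqref{equation:rita} only up to rescaling each summand $\sL_\chi^{-1}$, so one must still argue that any two admissible choices yield isomorphic algebras, which is again the statement that the discrepancy cocycle is a coboundary. Neither issue invalidates the approach, but ``reduces to the cocycle identity for the $\varepsilon$'s'' overstates what that identity alone delivers.
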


Equation (\ref{equation:rita}) shows that the divisors $D_g$ determine the line bundles $\sL_\chi$ up to torsion as follows.

\begin{definition}
For all $\chi$ set $L_\chi \in \Pic (Y)=\Div(Y)/\sim$ for the divisor class of the invertible sheaf $\sL_\chi$. We use the additive notation for the torsion product in $\Pic (Y)$.
\end{definition}
\begin{corollary}[see \cite{Rita:paper}*{Proposition 2.1}]\label{cor:Lchi}
\begin{equation*}
o(\chi) L_\chi \equiv \sum_{g \in G} \frac{o(\chi)r_g^\chi}{o(g)} D_g.
\end{equation*}
In particular
\begin{equation*}
L_\chi \equiv_{num} \sum_{g \in G} \frac{r_g^\chi}{o(g)} D_g.
\end{equation*}
\end{corollary}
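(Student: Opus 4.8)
The plan is to extract the Corollary from the multiplicativity relation (\ref{equation:rita}) by iterating it along the cyclic group $\langle \chi \rangle \subset G^*$ and telescoping. Write $n = o(\chi)$, so that $\chi^n = 1$ and hence $L_{\chi^n} \equiv 0$ (as $\sL_1 \cong \sO_Y$). First I would apply (\ref{equation:rita}) to the pair $(\chi^j,\chi)$ for each $j = 1, \dots, n-1$; in the additive notation of $\Pic(Y)$ this reads
\[
L_{\chi^j} + L_\chi \equiv L_{\chi^{j+1}} + \sum_{g \in G} \varepsilon_{\chi^j,\chi}^g\, D_g.
\]
Summing over $j$ and rearranging, the difference $\sum_{j=1}^{n-1}\bigl(L_{\chi^j} - L_{\chi^{j+1}}\bigr)$ telescopes to $L_\chi - L_{\chi^n} \equiv L_\chi$, so the $L$-terms collapse and leave
\[
n\, L_\chi \equiv \sum_{g \in G} \Bigl( \sum_{j=1}^{n-1} \varepsilon_{\chi^j,\chi}^g \Bigr) D_g .
\]
Thus everything reduces to identifying the integer coefficient $c_g := \sum_{j=1}^{n-1}\varepsilon_{\chi^j,\chi}^g$ attached to each $D_g$.

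The key observation for computing $c_g$ is that $\varepsilon_{\chi^j,\chi}^g$ is exactly the \emph{carry} produced when the residues $r_g^{\chi^j}$ and $r_g^\chi$ are added modulo $o(g)$. Since $\chi^{j+1}(g) = \chi^j(g)\,\chi(g)$, the definition of the $r$-values forces the integer identity
\[
r_g^{\chi^j} + r_g^\chi = r_g^{\chi^{j+1}} + o(g)\, \varepsilon_{\chi^j,\chi}^g ,
\]
because both summands lie in $\{0,\dots,o(g)-1\}$, so reducing their sum back into that range subtracts $o(g)$ precisely when the sum is at least $o(g)$, which is the case $\varepsilon_{\chi^j,\chi}^g = 1$. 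Summing this identity over $j = 1,\dots,n-1$, the $r$-terms telescope (using $r_g^{\chi^n} = r_g^{1} = 0$) to give $n\, r_g^\chi = o(g)\, c_g$, that is $c_g = o(\chi)\, r_g^\chi / o(g)$. Substituting back produces the first displayed congruence of the Corollary.

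For the ``in particular'' statement I would pass to numerical equivalence in the first congruence and divide by the integer $o(\chi)$; this is permissible because numerical equivalence classes of $\QQ$-divisors form a $\QQ$-vector space, yielding $L_\chi \equiv_{num} \sum_{g}\frac{r_g^\chi}{o(g)} D_g$. I expect the only genuinely delicate point to be the bookkeeping in the two telescopings, in particular verifying the carry identity for $\varepsilon_{\chi^j,\chi}^g$ and confirming that $c_g = o(\chi) r_g^\chi/o(g)$ is an integer --- the latter being exactly the content of $o(g) \mid o(\chi)\, r_g^\chi$, which holds since $\chi(g)^{o(\chi)} = 1$. Everything else is a formal consequence of the relation (\ref{equation:rita}) established in Theorem \ref{thm: rita}.
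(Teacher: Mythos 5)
Your proof is correct and follows essentially the same route as the paper: the paper iterates relation (\ref{equation:rita}) along the powers of $\chi$ by induction, establishing the intermediate identity $\sL_{\chi}^k \cong \sL_{\chi^k}\bigl( \sum_{g} \lfloor k r_g^\chi/o(g) \rfloor D_g \bigr)$ and then setting $k=o(\chi)$, which is exactly your telescoping argument unrolled, with your carry identity for $\varepsilon_{\chi^j,\chi}^g$ playing the role of the paper's observation that $r_g^{\chi^k}$ is the remainder of $k r_g^\chi$ modulo $o(g)$. The only cosmetic difference is that the paper keeps the running floor coefficient explicit at each stage rather than summing the carries at the end.
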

\begin{proof} Note first that by definition of $r_g^\chi$, for all $k \in \NN$, $r_g^{\chi^k}$ is the remainder of the Euclidean division of $kr_g^{\chi}$ by $o(g)$.
Then
\[
\sL_{\chi}^k \cong \sL_{\chi^k} \left( \sum_{g \in G} \left\lfloor \frac{k r_g^\chi}{o(g)} \right\rfloor  D_g \right)  
\]
follows by induction on $k$ applying (\ref{equation:rita}) to the products $\sL_\chi \otimes \sL_{\chi^{k-1}}$.

For $k=o(\chi)$ we obtain the stated formula since $\sL_1 \cong \sO_X$ and $\frac{o(\chi)r_g^\chi}{o(g)}$ is integral.
\end{proof}

In particular, if $\Pic(Y)$ is torsion free (for example if $Y$ is rational) then the divisors do determine the line bundles.

In the next sections we are going to walk in the opposite direction: first we look for the "good" possible $\sL_\chi$ and then we find suitable divisors $D_g$ realizing them. 

Of course the divisors will be free to move in their linear equivalence class. We find it important to notice that for general $G$ the line bundles $\sL_\chi$ do not determine even the linear equivalence class of the divisors $D_g$. In fact this fails already for cyclic groups of order $5$ of more. We just write one example of this phenomenon.

\begin{example}\label{ExampleZ5}
Set $G=\ZZ/5\ZZ=\left\{\overline{0}, \overline{1},\overline{2},\overline{3},\overline{4} \right\}$

Then the following choices
\begin{align*}
\deg D_{\overline{0}}&=0&
\deg D_{\overline{1}}&=2&
\deg D_{\overline{2}}&=0&
\deg D_{\overline{3}}&=0&
\deg D_{\overline{4}}&=2\\
\deg D_{\overline{0}}&=0&
\deg D_{\overline{1}}&=1&
\deg D_{\overline{2}}&=1&
\deg D_{\overline{3}}&=1&
\deg D_{\overline{4}}&=1\\
\end{align*}
give both Galois covers with Galois group $G$  and $\sL_{\chi}\cong \sO_{\mathbb P^1}(2)$ for all $\chi \neq 1$.
\end{example}

In contrast, we show in the forthcoming Theorem \ref{thm:Dg} that when  $G\cong \left( \ZZ/2\ZZ \right)^k$ the $L_\chi$ determine the linear equivalence class of the divisors $D_g$ up to torsion. 

We first need a Lemma on the sums of the $r_g^\chi$ for general abelian covers.

\begin{definition}
The natural isomorphism $G \rightarrow G^{**}$ allows each $g$ in $G$  to be considered as a character of $G^*$, which we will also denote by $g$, by setting 
\[
g(\chi)=\chi(g).
\]
Then $\ker g$ is the subgroup of $G^*$ of the characters $\chi$ such that $\chi(g)=1$. In other words
\[
\chi \in \ker g \Leftrightarrow g \in \ker \chi.
\]
\end{definition}

Let $\sH$ be a subgroup of $G^*$, possibly of the form  $\ker g$.  For all $g \in G$ we will denote by $g_{|\sH}$ the element of $\sH^*$ obtained restricting $g$ to $\sH$.

\begin{lemma}\label{lem: sumrchig}
For all $g\in G$, for each subgroup $\sH$ of $G^*$, 
\begin{equation}\label{eq: sumrchigH}
\sum_{\chi \in \sH} r^\chi_g=\frac{|\sH|}{2} o(g) \left( 1 -\frac{1}{o(g_{|\sH})} \right)
\end{equation}
In particular
\begin{equation}\label{eq: sumrchig}
\sum_{\chi \in G^*} r^\chi_g=\frac{|G|}{2}  \left( o(g)-1 \right).
\end{equation}
\end{lemma}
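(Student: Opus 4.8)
The plan is to reduce the sum to counting how often each possible value of $r_g^\chi$ occurs as $\chi$ ranges over $\sH$. The crucial observation is that evaluation at $g$, i.e.\ the map $\sH \to \CC^*$ sending $\chi \mapsto \chi(g) = g_{|\sH}(\chi)$, is a group homomorphism, so its image is the cyclic group generated by $g_{|\sH}$, of order $d := o(g_{|\sH})$.

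I would first set $m := o(g)$ and $\zeta := e^{2\pi i/m}$, so that by definition $\chi(g) = \zeta^{r_g^\chi}$ with $0 \leq r_g^\chi \leq m-1$. Since $\chi(g)^m = \chi(g^m) = 1$, the image of evaluation at $g$ lies in the group $\mu_m$ of $m$-th roots of unity; having order $d$, it is precisely $\mu_d$, whence $d \mid m$. Writing the elements of $\mu_d$ as $\zeta^{jm/d}$ for $j = 0, \dots, d-1$, the integer $r_g^\chi$ equals $jm/d$ exactly when $\chi(g) = \zeta^{jm/d}$.

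Next, because the evaluation homomorphism $\sH \to \mu_d$ is surjective with kernel of order $|\sH|/d$, each of the $d$ values $\zeta^{jm/d}$ is attained by exactly $|\sH|/d$ characters. Hence the multiset $\{r_g^\chi : \chi \in \sH\}$ consists of the numbers $0, m/d, 2m/d, \dots, (d-1)m/d$, each occurring $|\sH|/d$ times, and summing this arithmetic progression yields
\[
\sum_{\chi \in \sH} r_g^\chi = \frac{|\sH|}{d} \cdot \frac{m}{d} \sum_{j=0}^{d-1} j = \frac{|\sH|}{d} \cdot \frac{m}{d} \cdot \frac{d(d-1)}{2} = \frac{|\sH|}{2}\, m \left(1 - \frac{1}{d}\right),
\]
which is exactly \eqref{eq: sumrchigH}. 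For \eqref{eq: sumrchig} I would specialize to $\sH = G^*$: under the natural isomorphism $G \cong G^{**}$ the character $g_{|G^*}$ is $g$ itself, so $o(g_{|G^*}) = o(g) = m$ and $|G^*| = |G|$; substituting $d = m$ and $|\sH| = |G|$ collapses the factor to $\tfrac{|G|}{2}(m-1)$.

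I do not expect a serious obstacle. The one point requiring care is the identification of the image of evaluation at $g$ as exactly $\mu_d$, together with the divisibility $d \mid m$, since these are what guarantee that the attained exponents are precisely the multiples of $m/d$ and that they are equidistributed over the fibers; once this is in place the remainder is the summation of an arithmetic series.
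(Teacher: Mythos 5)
Your proof is correct, and it takes a somewhat different route from the paper's. The paper only extracts two pieces of information from the evaluation map $\chi \mapsto \chi(g)$ on $\sH$: the number of $\chi$ with $r_g^\chi = 0$ (namely $|\ker g_{|\sH}| = |\sH|/o(g_{|\sH})$), and the fact that the remaining terms pair up under $\chi \mapsto \chi^{-1}$ with $r_g^\chi + r_g^{\chi^{-1}} = o(g)$, so that their average is $o(g)/2$; the formula follows immediately. You instead determine the entire multiset of values: identifying the image of evaluation at $g$ as exactly $\mu_d$ with $d = o(g_{|\sH})$ dividing $o(g)$, and using that all fibers of a surjective homomorphism have equal size, you show the values $r_g^\chi$ are precisely $0, m/d, \dots, (d-1)m/d$, each with multiplicity $|\sH|/d$, and then sum the arithmetic progression. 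Your argument is a little longer and requires the care you flag (that the image is the \emph{full} group $\mu_d$ and that $d \mid m$, both of which you justify correctly), but it yields strictly more information --- the exact distribution of the $r_g^\chi$ --- whereas the paper's pairing trick gets the sum with less bookkeeping. Both are complete proofs; the specialization to $\sH = G^*$ at the end is handled identically.
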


\begin{proof}
Since $r^\chi_g=0$ if and only if $\chi \in \ker g$, then the number of addenda of $\sum_{\chi \in \sH} r^\chi_g$ that are equal to zero is exactly $
\left| \ker  g_{|\sH}\right|=\frac{|\sH|}{o(g_{|\sH})}$.

The remaining $|\sH| \left( 1 -\frac{1}{o(g_{|\sH})}\right)$ addenda  are integers between $1$ and $o(g)-1$. Since 
$r^\chi_g \neq 0$ implies $ r^\chi_g+r^{\chi^{-1}}_g=o(g)$ it follows that their average equals $\frac{o(g)}2$, thus giving the result.
\end{proof}

It follows that
\begin{proposition}	
\begin{equation}\label{eq: sumLchi}
\sum_{\chi \in G^*}  L_\chi \equiv_{num}\frac{|G|}{2} \sum_{g \in G}  \left( 1 - \frac{1}{o(g)}\right) D_g.
\end{equation}
Moreover, for every $g \in G$,
\begin{equation}\label{eq: sumLchiKerh}
\sum_{\chi \in \ker g}  L_\chi \equiv_{num} \frac{|G|}{2o(g)} \sum_{h \in G}  \left( 1 - \frac{1}{o(h_{|\ker g})}\right) D_h.
\end{equation}
\end{proposition}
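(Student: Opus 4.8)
The plan is to derive both numerical equivalences by summing the congruence of Corollary~\ref{cor:Lchi} over the appropriate set of characters and then collapsing the resulting inner sums with Lemma~\ref{lem: sumrchig}. Since everything takes place in the free part of $\Pic(Y)$ (we work modulo numerical, equivalently torsion, equivalence), we are free to interchange the two finite sums over $G^*$ and $G$ and to factor the divisors $D_g$ out of the character sum.

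For \eqref{eq: sumLchi}, I would start from $L_\chi \equiv_{num} \sum_{g \in G} \frac{r_g^\chi}{o(g)} D_g$ and sum over all $\chi \in G^*$. Exchanging the order of summation yields
\[
\sum_{\chi \in G^*} L_\chi \equiv_{num} \sum_{g \in G} \frac{1}{o(g)}\left( \sum_{\chi \in G^*} r_g^\chi \right) D_g.
\]
Now substitute the special case \eqref{eq: sumrchig} of the Lemma, namely $\sum_{\chi \in G^*} r_g^\chi = \frac{|G|}{2}(o(g)-1)$. The factor $o(g)$ in the denominator cancels against the $o(g)-1$ numerator up to the expected rational factor, giving $\frac{|G|}{2}\sum_{g}\frac{o(g)-1}{o(g)}D_g = \frac{|G|}{2}\sum_{g}\bigl(1-\frac{1}{o(g)}\bigr)D_g$, which is the claim.

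For \eqref{eq: sumLchiKerh}, the manipulation is identical except that the character sum ranges only over the subgroup $\sH = \ker g \le G^*$ (here $g$ is fixed and viewed as a character of $G^*$ through the canonical isomorphism $G \cong G^{**}$, so $\ker g$ is indeed a genuine subgroup). Summing Corollary~\ref{cor:Lchi} over $\chi \in \ker g$ and swapping sums gives $\sum_{\chi \in \ker g} L_\chi \equiv_{num} \sum_{h \in G} \frac{1}{o(h)}\bigl(\sum_{\chi \in \ker g} r_h^\chi\bigr) D_h$, and then the general formula \eqref{eq: sumrchigH} with $\sH = \ker g$ replaces the inner sum by $\frac{|\ker g|}{2}o(h)\bigl(1-\frac{1}{o(h_{|\ker g})}\bigr)$, so the factors $o(h)$ again cancel.

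The only point requiring a little care is the normalization constant: to match the stated coefficient $\frac{|G|}{2o(g)}$ one needs $|\ker g| = |G|/o(g)$. This holds because, under $G \cong G^{**}$, the element $g$ has order exactly $o(g)$ as a character of $G^*$, so $\ker g$ has index $o(g)$ in $G^*$ and hence order $|G^*|/o(g) = |G|/o(g)$. Granting this, the constant becomes $\frac{|\ker g|}{2} = \frac{|G|}{2o(g)}$ and the second identity follows verbatim. I do not expect any genuine obstacle here; the proposition is a purely formal consequence of the two preceding results, and the main thing to keep straight is that in \eqref{eq: sumLchiKerh} the index $g$ of the character subgroup and the summation index $h$ over $G$ play distinct roles.
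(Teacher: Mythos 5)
Your proposal is correct and follows exactly the paper's own argument: sum the congruence of Corollary \ref{cor:Lchi} over the relevant set of characters, swap the order of summation, and collapse the inner sum with \eqref{eq: sumrchig} (resp.\ \eqref{eq: sumrchigH} together with $|\ker g|=|G|/o(g)$). Nothing further is needed.
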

\begin{proof}
By Corollary \ref{cor:Lchi}
$
L_\chi \equiv_{num} \sum_{g \in G} \frac{r_g^\chi}{o(g)} D_g
$.

Summing over all $\chi$ and using  \eqref{eq: sumrchig} we obtain \eqref{eq: sumLchi}.

Setting $\sH=\ker g$ and summing only on the characters in $\sH$, using \eqref{eq: sumrchigH} and $|\sH|=\frac{|G|}{o(g)}$ we obtain \eqref{eq: sumLchiKerh}.
\end{proof}

Now we can give the announced formula for the linear systems of the divisors $D_g$ in terms of the $L_\chi$ when the group is $ \left( \ZZ/2\ZZ \right)^k$.

\begin{theorem}\label{thm:Dg}
 Let $\pi \colon X \rightarrow Y$ be a $k-$double cover, $Y$ smooth and $X$ normal, with set of data $\sL_\chi$, $D_g$. Then for all $g \in G$
\[
D_g \equiv_{num} \frac{1}{2^{k-2}} \left( \sum_{\chi \not\in \ker g} L_\chi - \sum_{\chi \in \ker g} L_\chi \right).
\]
\end{theorem}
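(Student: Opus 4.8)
The plan is to reduce everything to the two summation formulas \eqref{eq: sumLchi} and \eqref{eq: sumLchiKerh} of the preceding Proposition, exploiting the fact that in $G\cong(\ZZ/2\ZZ)^k$ every nonzero element has order $2$. I would first dispose of the identity (where $D_0=0$) and assume $g\neq 0$, so that $o(g)=2$ and $\ker g$ is an index-two subgroup of $G^*$. The quantity I must control is the signed sum $\sum_{\chi\notin\ker g}L_\chi-\sum_{\chi\in\ker g}L_\chi$, and the first move is to rewrite it as
\[
\sum_{\chi\notin\ker g}L_\chi-\sum_{\chi\in\ker g}L_\chi=\sum_{\chi\in G^*}L_\chi-2\sum_{\chi\in\ker g}L_\chi,
\]
which isolates exactly the two expressions evaluated in the Proposition.

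Next I would substitute. Since $|G|=2^k$ and $o(h)\in\{1,2\}$, the factor $1-\tfrac1{o(h)}$ appearing in \eqref{eq: sumLchi} equals $0$ for $h=0$ and $\tfrac12$ otherwise, so $\sum_{\chi\in G^*}L_\chi\equiv_{num}2^{k-2}\sum_{h\neq 0}D_h$. Likewise, with $o(g)=2$, formula \eqref{eq: sumLchiKerh} gives $\sum_{\chi\in\ker g}L_\chi\equiv_{num}2^{k-2}\sum_{h\in G}\bigl(1-\tfrac1{o(h_{|\ker g})}\bigr)D_h$, and again $1-\tfrac1{o(h_{|\ker g})}\in\{0,\tfrac12\}$ because $o(h_{|\ker g})\in\{1,2\}$.

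The crux is to determine for which $h$ the restricted character $h_{|\ker g}$ is trivial, i.e. when $o(h_{|\ker g})=1$; this is where I expect the only genuine work to lie. It is a short duality computation: the condition $\chi(h)=1$ for every $\chi\in\ker g$ says that $h$ annihilates the index-two subgroup $\ker g\le G^*$, and by Pontryagin duality the annihilator of $\ker g$ inside $G$ has order $|G|/|\ker g|=2$. Since $\chi(g)=1$ for all $\chi\in\ker g$ by the very definition of $\ker g$, this order-two annihilator is exactly $\{0,g\}$. Hence $o(h_{|\ker g})=1$ precisely for $h\in\{0,g\}$, which yields $\sum_{\chi\in\ker g}L_\chi\equiv_{num}2^{k-3}\sum_{h\neq 0,\,g}D_h$.

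Finally I would assemble the pieces:
\[
\sum_{\chi\in G^*}L_\chi-2\sum_{\chi\in\ker g}L_\chi\equiv_{num}2^{k-2}\sum_{h\neq 0}D_h-2^{k-2}\sum_{h\neq 0,\,g}D_h=2^{k-2}D_g,
\]
since the two sums differ only in the single term $h=g$. Dividing by $2^{k-2}$ gives the stated formula. The main obstacle is thus purely the duality identification of the annihilator of $\ker g$ as $\{0,g\}$; once that is established, everything else is bookkeeping driven entirely by the order-two simplification that collapses every factor $1-\tfrac1{o(\,\cdot\,)}$ to either $0$ or $\tfrac12$.
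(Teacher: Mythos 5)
Your proof is correct and follows essentially the same route as the paper's: both reduce the claim to the two summation formulas \eqref{eq: sumLchi} and \eqref{eq: sumLchiKerh} and hinge on the observation that $o(h_{|\ker g})=1$ exactly when $h\in\{0,g\}$. The only difference is cosmetic: you justify that last observation by a short Pontryagin-duality argument, whereas the paper simply asserts it.
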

\begin{proof} Let us fix an element $g \in G= \left( \ZZ/2\ZZ \right)^k$, $g \neq 0$.

We note that for all $h$ in $\left( \ZZ/2\ZZ \right)^k$,  $o(h_{|\ker g})$ equals $1$ if $h \in \langle g \rangle=\{0,g\}$ and $2$ otherwise. Then by (\ref{eq: sumLchiKerh})
\[
\sum_{\chi \in \ker g}  L_\chi \equiv_{num} 2^{k-2} \sum_{h \in G}  \left( 1 - \frac{1}{o(h_{|\ker g})}\right) D_h
= 2^{k-3} \sum_{g \not\in \langle h\rangle}   D_h.
\]
By (\ref{eq: sumLchi}), recalling that $D_0=0$, we obtain  
$\sum_{\chi \in G^*}  L_\chi \equiv_{num}2^{k-2}\sum_{h \in G} D_h$ and
then
\begin{multline*}
D_g= D_g+D_0= \sum_{h \in G} D_h-\sum_{h \not\in \langle g\rangle}   D_h=\frac{1}{2^{k-2}}\left( \sum_{\chi \in G^*} L_\chi -2\sum_{\chi \in \ker g}  L_\chi \right)=\\
=\frac{1}{2^{k-2}} \left( \sum_{\chi \not\in \ker g} L_\chi - \sum_{\chi \in \ker g} L_\chi \right).
\end{multline*}
\end{proof}

\section{The canonical system of an abelian cover}
 
 A canonical divisor $K_X$ on a normal variety $X$ is a Weil divisor, the closure of a canonical divisor of the smooth part of $X$ (see \cite{YPG}*{(1.5)}).
 
 If $ \pi \colon X \rightarrow Y$ is a $G-$cover, then $G$ acts on $\pi_* \left( \sO_{X}(K_X)\right)$ inducing a decomposition on it in eigenspaces
 \[
 \pi_* (\sO_{X}(K_X))=\bigoplus_{\chi \in G^*}  \pi_* (\sO_{X}(K_X))^{(\chi)}
 \]  
 
\begin{theorem}[\cite{rigidity}*{Proposition 2.4}, see also \cite{Rita:paper}*{Proposition 4.1, c) for the case when $X$ is smooth}]\label{thm: canonicalsplitting}  Let  $\pi \colon X \rightarrow Y$ be an abelian cover, with $X$ normal and $Y$ smooth, whose building data are $\sL_\chi$ and  $D_g$. Then
 \begin{equation}\label{eq: canonicalsplitting}
 (\pi_*\sO_X(K_X))^{(\chi)}\cong \sO_{Y}(K_Y) \otimes \sL_{\chi^{-1}}.
 \end{equation}
\end{theorem}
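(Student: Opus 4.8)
The plan is to deduce the statement from Grothendieck--Serre duality for the finite morphism $\pi$, carried out $G$-equivariantly, and then to read off the $\chi$-isotypic components.

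First I would work over the open set $U \subseteq Y$ over which $\pi$ is flat and $X$ is smooth. Its complement has codimension at least $2$: by miracle flatness ($Y$ being smooth and the fibres of $\pi$ being finite, hence of constant dimension $0$) the morphism $\pi$ is flat exactly where $X$ is Cohen--Macaulay, so the non-flat locus is the image under $\pi$ of the non-Cohen--Macaulay locus of $X$; the latter is contained in $\Sing X$ and hence has codimension $\geq 2$ since $X$ is normal, and $\pi$ finite preserves codimension. Over $U$ the morphism is finite and flat between smooth varieties, so ordinary relative duality applies and produces a canonical, hence $G$-equivariant, isomorphism
\[
\pi_* \sO_X(K_X) \cong \mathcal{H}om_{\sO_Y}\!\left( \pi_* \sO_X, \, \sO_Y(K_Y) \right),
\]
where $G$ acts on the right through its action on $\pi_* \sO_X$.

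Next I would extend this isomorphism from $U$ to all of $Y$. Both sides are reflexive $\sO_Y$-modules carrying a $G$-action: the left because $\sO_X(K_X)$ is reflexive on the normal variety $X$ and the pushforward of a reflexive sheaf along the finite map $\pi$ to the smooth variety $Y$ is again reflexive; the right because, $\sO_Y(K_Y)$ being a line bundle, it equals $\left(\pi_*\sO_X\right)^\vee \otimes \sO_Y(K_Y)$, and the dual of any coherent sheaf on the integral normal scheme $Y$ is reflexive. Since $Y \setminus U$ has codimension $\geq 2$, the $G$-equivariant isomorphism over $U$ extends uniquely to one over all of $Y$.

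Finally I would substitute the character decomposition $\pi_* \sO_X = \bigoplus_{\chi \in G^*} \sL_\chi^{-1}$ and take isotypic components. Because $\mathcal{H}om(-,\sO_Y(K_Y))$ is contravariant, the $\chi$-eigenspace of the right-hand side is the $\sO_Y(K_Y)$-dual of the $\chi^{-1}$-eigenspace of $\pi_*\sO_X$, namely $\mathcal{H}om(\sL_{\chi^{-1}}^{-1}, \sO_Y(K_Y)) \cong \sO_Y(K_Y)\otimes \sL_{\chi^{-1}}$; comparing with the left-hand side gives the claimed formula. The main obstacle I anticipate is the equivariant bookkeeping that forces the inverse character $\chi^{-1}$ rather than $\chi$: one must verify that if $g$ acts by $\chi(g)$ on a summand, then it acts by $\chi(g)^{-1}=\chi^{-1}(g)$ on its dual, which is precisely the sign flip producing $\sL_{\chi^{-1}}$. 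The extension across the codimension $\geq 2$ locus, by contrast, is routine once the reflexivity of both sides is established.
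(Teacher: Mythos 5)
Your argument is correct and follows essentially the same route as the proof in the sources the paper cites for this statement (the paper itself gives no proof, referring to \cite{rigidity}*{Proposition 2.4} and \cite{Rita:paper}*{Proposition 4.1}): equivariant duality for the finite flat morphism gives $\pi_*\sO_X(K_X)\cong\mathcal{H}om_{\sO_Y}(\pi_*\sO_X,\sO_Y(K_Y))$, and the $\chi$-eigenspace of the right-hand side is the $\sO_Y(K_Y)$-dual of the $\chi^{-1}$-eigenspace of $\pi_*\sO_X$, namely $\sO_Y(K_Y)\otimes\sL_{\chi^{-1}}$. Your reduction to the locus where $\pi$ is flat and $X$ is smooth, followed by the extension across codimension $\geq 2$ via reflexivity of both sides, correctly handles the case of normal, possibly singular $X$.
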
 

Consider a global section  $\sigma\in H^0(\sO_{Y}(K_Y) \otimes \sL_{\chi^{-1}})$, and let $(\sigma)\in \Div(Y)$ be the induced effective divisor.
By (\ref{eq: canonicalsplitting}) $\sigma$ determines an element of $H^0(\pi_*\sO_X(K_X)) \cong H^0(\sO_X(K_X))$, whose divisor is, by the proof of  \cite{rigidity}*{Proposition 2.4} (compare also  \cite{Liedtke}*{Section 3.4}),
\begin{equation}\label{eq: pullbackofform} 
\pi^* (\sigma) + \sum_g (o(g)-r_g^{\chi^{-1}}-1)  R_g.
\end{equation}
It follows 
\begin{proposition}\label{prop: base locus}
Assume that all not empty linear systems $\left| K_Y + L_{\chi}\right|$ are base-point-free.

Then the base locus of $|K_X|$ equals
\[
\bigcap_{\substack{\chi \in G^*: \\ |K_Y+L_\chi|\neq \emptyset}}\ \bigcup_{\substack{g \in G :\\ r^\chi_g \neq o(g)-1}}  R_g
\]
\end{proposition}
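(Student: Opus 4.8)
The plan is to compute the base locus of $|K_X|$ directly from the explicit description of the canonical sections provided by Theorem \ref{thm: canonicalsplitting} and formula \eqref{eq: pullbackofform}. The starting point is the eigenspace decomposition $H^0(\sO_X(K_X)) \cong \bigoplus_{\chi} H^0(\sO_Y(K_Y)\otimes \sL_{\chi^{-1}})$, under which a section $\sigma$ in the $\chi$-eigenspace has divisor $\pi^*(\sigma)+\sum_g (o(g)-r_g^{\chi^{-1}}-1)R_g$. The base locus of $|K_X|$ is the common zero locus of all these sections as $\chi$ ranges over the characters with $|K_Y+L_\chi|\neq\emptyset$ and $\sigma$ ranges over $H^0(\sO_Y(K_Y)\otimes\sL_{\chi^{-1}})$; since base loci of unions of eigenspaces intersect, I would write the base locus as $\bigcap_\chi (\text{base locus of the $\chi$-eigenspace})$.

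The key computation is to identify, for a single fixed $\chi$ with $|K_Y+L_\chi|\neq\emptyset$, the base locus of the sections coming from that eigenspace. A point $x\in X$ lies in this base locus if and only if every divisor $\pi^*(\sigma)+\sum_g(o(g)-r_g^{\chi^{-1}}-1)R_g$ passes through $x$. The fixed part $\sum_g(o(g)-r_g^{\chi^{-1}}-1)R_g$ is common to all sections in the eigenspace, so $x$ is automatically a base point if it lies on some $R_g$ with nonzero coefficient, i.e.\ with $o(g)-r_g^{\chi^{-1}}-1>0$, equivalently $r_g^{\chi^{-1}}\neq o(g)-1$. Otherwise $x$ is a base point only if it lies on $\pi^*(\sigma)$ for every $\sigma$, that is, if $\pi(x)$ is a base point of the movable part $|K_Y+L_\chi|$ — and here the hypothesis that this linear system is base-point-free kicks in, forcing no further contribution. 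Thus for each $\chi$ the eigenspace base locus is exactly $\bigcup_{g:\, r_g^{\chi^{-1}}\neq o(g)-1} R_g$. Intersecting over all relevant $\chi$ and relabeling $\chi^{-1}$ as $\chi$ (a harmless reindexing of the product over characters) yields the claimed formula.

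The main obstacle I anticipate is handling the reindexing and the base-point-freeness reduction cleanly. One must be careful that the movable part $\pi^*(\sigma)$ and the fixed ramification part genuinely decouple: the fixed divisor $\sum_g(o(g)-r_g^{\chi^{-1}}-1)R_g$ is supported on the ramification, while $\pi^*(\sigma)$ is a pullback, so I would argue that at a point $x\notin\bigcup R_g$ the local equation of $\pi^*(\sigma)$ near $x$ is just the pullback of a local equation of $(\sigma)$ near $\pi(x)$, and base-point-freeness of $|K_Y+L_\chi|$ lets me choose $\sigma$ avoiding $\pi(x)$. The one genuinely delicate point is whether a base point could arise where two eigenspaces' fixed loci interact — but since we take the \emph{intersection} over $\chi$, a point survives in the base locus only if it is forced by every single $\chi$, and the above shows each $\chi$ forces precisely $\bigcup_{g:\,r_g^{\chi^{-1}}\neq o(g)-1}R_g$, so no cross-terms appear. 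The index substitution $\chi\leftrightarrow\chi^{-1}$ preserves the set of characters with nonempty $|K_Y+L_\chi|$ only up to the symmetry $L_\chi\leftrightarrow L_{\chi^{-1}}$, so I would either phrase the hypothesis symmetrically or note that the product ranges over all of $G^*$ and is invariant under inversion.
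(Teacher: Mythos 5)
Your proposal is correct and follows essentially the same route as the paper: apply formula \eqref{eq: pullbackofform} to see that the base locus of each character eigen-subsystem is the support of its fixed ramification part $\sum_g (o(g)-r_g^{\chi}-1)R_g$ (the movable part contributing nothing because $|K_Y+L_\chi|$ is base-point-free), and then intersect over the characters, since the eigenspaces span $H^0(\sO_X(K_X))$. The reindexing $\chi\leftrightarrow\chi^{-1}$ you worry about is indeed harmless, as inversion is a bijection of $G^*$ compatible with the condition $|K_Y+L_\chi|\neq\emptyset$ as it appears in the statement.
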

 
\begin{proof}
Since $\left| K_Y + L_{\chi}\right|$ is base-point-free, if the linear  subsystem of $|K_X|$ corresponding to $H^0(\sO_X(K_X))^{(\chi^{-1})}$ is not empty, by  (\ref{eq: pullbackofform}) its base locus equals
\[
\bigcup_{\substack{g \in G :\\ r^\chi_g \neq o(g)-1}}  R_g.
\]
Since these linear subsystems generate $|K_X|$, its base locus equals their intersection.
\end{proof}

We recall that all linear systems on $\PP^n$ are base-point-free, so Proposition \ref{prop: base locus} gives a complete description of the base locus of the canonical system of any abelian cover of a projective space. For $k-$double covers of $\PP^n$ we obtain as in \cite{gpr}*{Section 2} (see also  \cite{bidouble}*{Section 2}),

\begin{corollary}\label{cor: base locus}
Let $\pi \colon X \rightarrow \PP^n$ be a $k-$double cover with building data $L_\chi$, $D_g$. Then $|K_X|$ is base-point-free if and only if
\[
\bigcap_{ \chi: \deg L_\chi \geq n+1}\ \bigcup_{g \in \ker \chi}  D_g=\emptyset.
\]
 \end{corollary}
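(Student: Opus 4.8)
The plan is to specialize the description of the base locus of $|K_X|$ from Proposition \ref{prop: base locus} to the situation $Y=\PP^n$ with $G\cong(\ZZ/2\ZZ)^k$, and then translate the ramification divisors $R_g$ back into the branch divisors $D_g$ downstairs. First I would invoke the remark recalled after Proposition \ref{prop: base locus}: every linear system on $\PP^n$ is base-point-free, so the hypothesis of Proposition \ref{prop: base locus} is automatically satisfied and that Proposition applies verbatim. Thus the base locus of $|K_X|$ equals
\[
\bigcap_{\substack{\chi \in G^*: \\ |K_{\PP^n}+L_\chi|\neq \emptyset}}\ \bigcup_{\substack{g \in G :\\ r^\chi_g \neq o(g)-1}}  R_g.
\]

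The next step is purely arithmetic and uses that $G$ is $2$-elementary. For every nontrivial $g$ we have $o(g)=2$, so the only possible values of $r^\chi_g$ are $0$ and $1$; the condition $r^\chi_g\neq o(g)-1=1$ is therefore equivalent to $r^\chi_g=0$, which by the definition of $r^\chi_g$ means exactly $\chi(g)=1$, i.e. $g\in\ker\chi$, equivalently $\chi\in\ker g$. (The identity $g=0$ contributes $R_0=\emptyset$ and may be ignored.) Hence the inner union over $\{g:r^\chi_g\neq o(g)-1\}$ is precisely the union over $g\in\ker\chi$ of the $R_g$. Similarly, since $K_{\PP^n}\equiv_{num}\sO_{\PP^n}(-n-1)$ and the $L_\chi$ are line bundles on $\PP^n$ determined by their degree, the condition $|K_{\PP^n}+L_\chi|\neq\emptyset$ is equivalent to $\deg L_\chi\geq n+1$. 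Substituting both translations, the base locus becomes $\bigcap_{\chi:\deg L_\chi\geq n+1}\bigcup_{g\in\ker\chi}R_g$.

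The final step is to pass from $X$ to $\PP^n$. Since $R_g=\pi^{-1}(D_g)$ and $\pi$ is surjective, the base locus is empty upstairs if and only if its image under $\pi$ is empty, and $\pi\bigl(\bigcap_\chi\bigcup_{g\in\ker\chi}R_g\bigr)=\bigcap_\chi\bigcup_{g\in\ker\chi}D_g$ because $\pi$ commutes with the set-theoretic operations here (unions clearly, and for the intersection one uses that $R_g$ is the full preimage of $D_g$, so a point of $\PP^n$ lies in the intersection of the images exactly when its fiber meets each $R_g$ in the relevant families). This yields the stated criterion: $|K_X|$ is base-point-free if and only if
\[
\bigcap_{\chi:\,\deg L_\chi\geq n+1}\ \bigcup_{g\in\ker\chi}D_g=\emptyset.
\]

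I expect the only genuinely delicate point to be the commutation of $\pi$ with the intersection in the last step; one must check that taking preimages does not create spurious intersection points, which holds here because each $R_g$ is literally $\pi^{-1}(D_g)$ rather than merely lying over $D_g$, so fibers are either entirely inside or entirely outside each $R_g$. The rest is the bookkeeping of rewriting $r^\chi_g\neq o(g)-1$ as $\chi\in\ker g$ in the $2$-elementary case and rewriting the nonemptiness of $|K_{\PP^n}+L_\chi|$ as a degree bound, both of which are routine.
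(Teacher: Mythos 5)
Your proposal is correct and follows exactly the route the paper intends: the paper derives this corollary directly from Proposition \ref{prop: base locus} after remarking that all linear systems on $\PP^n$ are base-point-free, with the same translations ($r^\chi_g\neq o(g)-1$ becomes $g\in\ker\chi$ since $o(g)=2$, nonemptiness of $|K_{\PP^n}+L_\chi|$ becomes $\deg L_\chi\geq n+1$, and $R_g=\pi^{-1}(D_g)$ with $\pi$ surjective passes the emptiness condition down to $\PP^n$). The "delicate point" you flag is handled correctly: preimages commute with intersections, so no spurious points arise.
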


\section{Smooth k-double planes with \texorpdfstring{$p_g$}{pg}=3}

\begin{definition}\label{generic}
A smooth $k-$double plane is a $k-$double cover $\pi \colon X \rightarrow \PP^2$  such that  all $D_g$ are smooth, each two of them intersect transversally, and no point in $\PP^2$ belongs to three of them.

 In particular the branch divisor $D=\sum D_g$ is a smooth normal crossing divisor.
\end{definition}

The assumption ensures the smoothness of $X$.
\begin{proposition}
Let $\pi \colon X \rightarrow \PP^2$ be a smooth $k-$double plane. Then $X$ is smooth.
\end{proposition}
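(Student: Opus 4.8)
The plan is to verify smoothness locally, reducing the problem to an explicit computation at a single point of the branch locus $D$. Since $\PP^2$ is smooth and $\pi$ is finite, the only points of $X$ where smoothness could fail are the ramification points, namely the points lying over $D$. Away from $D$ the cover is \'etale, hence $X$ is smooth there. So first I would fix a point $y \in D \subset \PP^2$ and analyze the local structure of $X$ over $y$, using the normal crossing hypothesis from Definition \ref{generic} to control how many branch divisors pass through $y$.

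By the transversality and no-triple-point assumptions, through any $y \in D$ there pass either exactly one of the divisors $D_g$, or exactly two of them, say $D_{g_1}$ and $D_{g_2}$ with $g_1 \neq g_2$, meeting transversally. I would choose local analytic coordinates $(u,v)$ on $\PP^2$ centered at $y$ so that each branch divisor through $y$ is a coordinate axis: in the one-divisor case $D_{g}=\{u=0\}$, and in the two-divisor case $D_{g_1}=\{u=0\}$, $D_{g_2}=\{v=0\}$. The local structure of an abelian cover over such a point is governed by the inertia groups of the components of $R$ lying over the $D_{g_i}$, which by the discussion preceding Definition \ref{generic} are the cyclic groups generated by the corresponding $g_i$; for $G=(\ZZ/2\ZZ)^k$ each such inertia group has order $2$.

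Concretely, I would extract a local equation for $X$ inside the total space of the relevant line bundles from Theorem \ref{thm: rita}. In the one-divisor case, locally the cover in the $g$-direction is the double cover $w^2 = u$ (with the other characters contributing unramified factors over $y$), whose total space is smooth. In the two-divisor case the two transverse branch divisors contribute two independent square-root extractions $w_1^2=u$, $w_2^2=v$, and since $g_1,g_2$ are distinct involutions the corresponding local equations are in independent variables; the resulting local model is smooth because $u \mapsto w_1^2$ and $v \mapsto w_2^2$ each define smooth double covers and the product of two smooth germs is smooth. In every case the local ring of $X$ at a point over $y$ is regular, so $X$ is smooth.

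The main obstacle, and the step deserving the most care, is justifying rigorously that the local equations genuinely decouple into independent one-variable square-root extractions, so that the normal crossing hypothesis on $D$ translates into a smooth local model for $X$. This requires being careful about which characters $\chi$ actually ramify over $y$ and checking that the compatibility relations \eqref{equation:rita} produce, in suitable local trivializations, exactly the equations $w_i^2=u_i$ with no cross terms; the transversality and the absence of triple points are precisely what guarantee that at most two such equations appear and that they involve distinct local coordinates. Once that local normal form is established, smoothness is immediate from the Jacobian criterion.
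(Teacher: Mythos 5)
Your argument is correct, but it is not what the paper does: the paper disposes of this proposition in one line, by observing that it is a special case of Pardini's general smoothness criterion (\cite{Rita:paper}*{Proposition 3.1}; see also \cite{ManettiInventiones}*{Proposition 3.14}), which says that an abelian cover is smooth over a point $y$ provided the branch components through $y$ are smooth, cross normally, and their inertia data are ``independent'' in the appropriate sense. What you have written is essentially a self-contained proof of that cited result in the special case $G=(\ZZ/2\ZZ)^k$, which is a perfectly legitimate alternative: the local model is \'etale off $D$, is $w^2=u$ over a point of exactly one $D_g$, and is $w_1^2=u,\ w_2^2=v$ over a transverse crossing of $D_{g_1}$ and $D_{g_2}$. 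Two points deserve to be made explicit. First, the decoupling into independent square roots in the two-divisor case uses that $g_1\neq g_2$, so that the two inertia groups $\langle g_1\rangle$ and $\langle g_2\rangle$ generate a subgroup of order $4$; if the two branches through $y$ carried the same label $g$ the local equation would instead be $w^2=uv$, an $A_1$ singularity. Here $g_1\neq g_2$ is guaranteed because two crossing branches with the same label would be two components of a single $D_g$, contradicting the smoothness of each $D_g$ required in Definition \ref{generic}. Second, the step you flag as delicate --- that near a point $x$ over $y$ the cover factors as an \'etale cover following a totally ramified one for the stabilizer $G_x$, which is generated by the inertia groups of the branches through $x$ --- is exactly the local analysis carried out in \cite{Rita:paper}, so if you do not want to reprove it you may as well cite it, which is precisely the paper's choice. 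The trade-off is the usual one: the citation is shorter and covers general abelian $G$, while your computation is elementary and makes the $(\ZZ/2\ZZ)^k$ local normal form visible.
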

\begin{proof}
This is a special case of \cite{Rita:paper}*{Proposition 3.1} (see also \cite{ManettiInventiones}*{Proposition 3.14}).
\end{proof}

\begin{notation}
         It is convenient to consider $G$ and $G^*$ as vector spaces over the field with $2$ elements as in \cite{ManettiInventiones}*{Setup 3.2}.   We are thus going to switch to the additive notation, so for example the sheaf $\sL_1$ will be $\sL_0$ from now on, and for each character $\chi$ we will write $-\chi$ for the character that was called $\chi^{-1}$ in the previous section.
         
         Denote by $e_1, \cdots ,e_k$ the standard basis of $G=\left(\mathbb{Z}/2\right)^k$ and by $\epsilon_1, \ldots, \epsilon_k$ the dual basis of $G^*$.

	To every  smooth $k-$double plane $\pi \colon X \rightarrow \PP^2$ we consider its building data $ L_\chi, D_g$ and the numbers
         \begin{align*}
         d_{g}&:=\deg D_{g},&
         l_{\chi}&:=\deg L_{\chi}.
         \end{align*} 
\end{notation}
Note that $d_0=l_0=0$. 

Note moreover that since $ G=\left( \ZZ/2\ZZ \right)^k$, for each $\chi \in G^*$, $\chi=-\chi$. We will use this often in the following computations.

\begin{definition}\label{def: three cases}
We will say that a smooth $k-$double plane with $p_g=3$ is
\begin{enumerate}
\item[] of type $A$ if $l_{\epsilon_1}=4$, $l_\chi\in\{1,2\}$ for all $\chi \not\in \left\langle \epsilon_1\right\rangle$
\item[] of type $B$ if  $l_{\epsilon_1}=l_{\epsilon_2}=l_{\epsilon_1+\epsilon_2}=3$, $l_\chi\in\{1,2\}$ for all $\chi \not\in \left\langle \epsilon_1,\epsilon_2 \right\rangle$
\item[] of type $C$ if $l_{\epsilon_1}=l_{\epsilon_2}=l_{\epsilon_3}=3$, $l_\chi\in\{1,2\}$ for all $\chi \not\in \{0,\epsilon_1,\epsilon_2,\epsilon_3\}$
\end{enumerate}
\end{definition}

By (\ref{eq: canonicalsplitting}) for a smooth $k-$double plane $\pi \colon X \rightarrow \PP^2$
\begin{equation}\label{eq: pg}
p_g(X)=h^0(\sO_{X}(K_X))=h^0( \pi_* (\sO_{X}(K_X)))=\sum_{\chi \in G^*}  h^0 (\sO_{\PP^2}(l_{\chi}-3)),
\end{equation}
so in all cases of Definition \ref{def: three cases} we obtain $p_g(X)=3$. Conversely 
\begin{proposition}
Up to automorphisms of $G$ every smooth $k-$double plane with $p_g(X)=3$ falls in one of the three cases in Definition \ref{def: three cases}.
\end{proposition}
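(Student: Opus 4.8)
The plan is to read off the possible degree sequences $(l_\chi)_{\chi\in G^*}$ from the genus formula \eqref{eq: pg} and then normalise them by an automorphism of $G$. Recall that $h^0(\sO_{\PP^2}(n))=\binom{n+2}{2}$, which equals $0$ for $n<0$ and takes the values $1,3,6,10,\dots$ for $n=0,1,2,3,\dots$. Hence in \eqref{eq: pg} a character $\chi$ contributes $0$ if $l_\chi\le 2$, contributes $1$ if $l_\chi=3$, contributes $3$ if $l_\chi=4$, and contributes at least $6$ if $l_\chi\ge 5$. Since $l_0=0$ the trivial character contributes nothing, so the condition $p_g(X)=3$ reads $\sum_{\chi\neq 0} h^0(\sO_{\PP^2}(l_\chi-3))=3$. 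The only ways to write $3$ as a sum of numbers from $\{1,3,6,10,\dots\}$ are $3=3$ and $3=1+1+1$; therefore either exactly one nonzero character $\chi_0$ has $l_{\chi_0}=4$ and every other character has $l_\chi\le 2$, or exactly three nonzero characters have $l_\chi=3$ and every other character has $l_\chi\le 2$.

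Next I would pin down the characters of small degree. By the connectedness Remark, $X$ is connected if and only if $l_\chi\ge 1$ for every $\chi\neq 0$; since $X$ is a surface of general type it is irreducible, so $l_\chi\ge 1$ for all $\chi\neq 0$. (Alternatively, the connected components of $X$ are permuted transitively by $G$, so their number is $[G:H]$ for a stabiliser $H$, hence a power of $2$; as $p_g(X)=[G:H]\cdot p_g(X_0)$ this number divides $p_g(X)=3$ and thus equals $1$.) Consequently every character not already forced to have degree $3$ or $4$ satisfies $l_\chi\in\{1,2\}$, which is exactly the shape required in Definition \ref{def: three cases}.

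It remains to bring the heavy characters into standard position using $\Aut(G)\cong \GL_k(\FF_2)$, whose induced action on $G^*\cong\FF_2^{\,k}$ is again the full linear group and in particular is transitive on nonzero vectors. In the first case I would pick an automorphism sending $\chi_0$ to $\epsilon_1$; then $l_{\epsilon_1}=4$ and $l_\chi\in\{1,2\}$ for all $\chi\notin\langle\epsilon_1\rangle$, i.e.\ type $A$. In the second case the three heavy characters $\chi_1,\chi_2,\chi_3$ are distinct and nonzero, and over $\FF_2$ the only possible linear dependence among three distinct nonzero vectors is $\chi_1+\chi_2+\chi_3=0$ (a relation supported on one or two of them would force a vanishing or a coincidence). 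Hence either they are linearly independent, and I complete them to a basis and apply the automorphism carrying them to $\epsilon_1,\epsilon_2,\epsilon_3$, landing in type $C$; or $\chi_1+\chi_2+\chi_3=0$, so $\{\chi_1,\chi_2,\chi_3\}$ is the set of nonzero vectors of the plane $\langle\chi_1,\chi_2\rangle$, and an automorphism sends $\chi_1,\chi_2$ to $\epsilon_1,\epsilon_2$ and hence $\chi_3$ to $\epsilon_1+\epsilon_2$, landing in type $B$. In each case one checks that the complementary condition ``$l_\chi\in\{1,2\}$ for the remaining characters'' translates precisely into the constraint stated in Definition \ref{def: three cases}.

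The computations here are all elementary; the only points requiring care are the reduction $l_\chi\ge 1$ via connectedness, which rules out degenerate degree sequences lying outside the three types, and the $\FF_2$-linear-algebra dichotomy separating type $B$ from type $C$. I expect the latter --- recognising that three distinct nonzero characters fall into exactly two $\GL_k(\FF_2)$-orbits, and matching the resulting complementary conditions with Definition \ref{def: three cases} --- to be the main, though modest, obstacle.
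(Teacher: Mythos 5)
Your argument is correct and follows essentially the same route as the paper: connectedness forces $l_\chi\ge 1$ for $\chi\neq 0$, the genus formula \eqref{eq: pg} forces either one character of degree $4$ or three of degree $3$, and an automorphism of $G$ (with the linear dependence/independence dichotomy over $\FF_2$) puts the heavy characters in the standard positions of Definition \ref{def: three cases}. The paper's proof is just a terser version of the same reasoning.
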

\begin{proof}
Since $X$ is connected, for all $\chi\neq 0$, $H^0(\sL^{-1}_\chi)=0$ and thus $l_\chi >0$. 

By (\ref{eq: pg}) $l_\chi \leq 4$ and either there is only one $\chi$ with $l_\chi \geq 3$, in which case $l_\chi=4$, or there are three $\chi$ with $l_\chi \geq 3$, all with $l_\chi=3$.

Using an automorphism of $G$, we can reduce the former case to "type A", and the latter case either to "type B" or "type C", depending if the three special characters are linearly dependent or not.
\end{proof}

We  now look at when a $k-$double plane with  $p_g=3$ has canonical system base-point-free.

\begin{lemma}\label{lemma:bpf}
Let $\pi \colon X \rightarrow \PP^2$ be a smooth $k-$double plane with $p_g=3$ of type $t$. Then $|K_X|$ is base-point-free if and only if
\begin{itemize}
\item[] $d_g=0$ for all $g \in \ker \epsilon_1$ when $t=A$;
\item[] $d_g=0$ for all $g \in \ker \epsilon_1 \cap \ker \epsilon_2$ when $t=B$;
\item[] $d_g=0$ for all $g \in \bigcup_{1 \leq i < j\leq 3} \left( \ker \epsilon_i \cap \ker \epsilon_j \right)$ when $t=C$. 
\end{itemize} 
\end{lemma}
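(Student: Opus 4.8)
The plan is to apply Corollary \ref{cor: base locus}, which for a $k$-double plane says that $|K_X|$ is base-point-free if and only if
\[
\bigcap_{\chi:\, l_\chi \geq 3}\ \bigcup_{g \in \ker\chi} D_g = \emptyset.
\]
By Definition \ref{def: three cases} the set of \emph{special} characters $\{\chi : l_\chi \geq 3\}$ is exactly $\{\epsilon_1\}$ in type $A$, $\{\epsilon_1,\epsilon_2,\epsilon_1+\epsilon_2\}$ in type $B$, and $\{\epsilon_1,\epsilon_2,\epsilon_3\}$ in type $C$ (all other characters have $l_\chi\le 2$, and $l_0=0$). I would then reformulate the condition pointwise. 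For $x \in \PP^2$ set $S(x)=\{g \in G : x \in D_g\}$; the normal crossing hypothesis in Definition \ref{generic} gives $|S(x)| \leq 2$ and $0 \notin S(x)$. For $g \in G$ write $K(g)$ for the set of special characters $\chi$ with $g \in \ker\chi$. Since $x\in\bigcup_{g\in\ker\chi}D_g$ exactly when $S(x)\cap\ker\chi\neq\emptyset$, the point $x$ lies in the base locus if and only if $\bigcup_{g \in S(x)} K(g)$ equals the whole set of special characters; that is, the special characters must be covered by the at most two divisors passing through $x$.

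For types $A$ and $B$ this reduces to elementary $\FF_2$-linear algebra. In type $A$ there is a single special character, so $x$ is a base point iff some $g \in \ker\epsilon_1$ has $x \in D_g$; hence the base locus is $\bigcup_{g \in \ker\epsilon_1} D_g$, which is empty iff $d_g=0$ for all $g \in \ker\epsilon_1$. In type $B$ the three special characters are the nonzero vectors of the plane $\langle\epsilon_1,\epsilon_2\rangle\subseteq G^*$, and inspecting $(\epsilon_1(g),\epsilon_2(g))$ one checks that $K(g)$ is either all three characters (exactly when $g \in \ker\epsilon_1 \cap \ker\epsilon_2$) or a single character. Since a union of at most two singletons cannot exhaust three elements, a base point forces some $g \in S(x) \cap \ker\epsilon_1 \cap \ker\epsilon_2$ with $d_g>0$; conversely any point of such a $D_g$ is a base point. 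Thus $|K_X|$ is base-point-free iff $d_g=0$ for all $g \in \ker\epsilon_1 \cap \ker\epsilon_2$.

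Type $C$ is the delicate case, because now the three special characters can be covered by two divisors. Here $|K(g)|$ equals the number of zero entries among the first three coordinates of $g$, so the relevant set $\bigcup_{i<j}(\ker\epsilon_i \cap \ker\epsilon_j)$ is exactly $\{g : |K(g)|\geq 2\}$. The ``if'' direction is the same counting as before: if every $g$ with $|K(g)|\geq 2$ has $d_g=0$, then every $g$ with $D_g\neq\emptyset$ covers at most one special character, so two divisors through a point cover at most two of $\epsilon_1,\epsilon_2,\epsilon_3$ and there is no base point. For the converse I would start from $g_0$ with $d_{g_0}>0$ and $|K(g_0)|\geq 2$. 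If $|K(g_0)|=3$ then any point of $D_{g_0}$ is a base point. If $|K(g_0)|=2$, say $K(g_0)=\{\epsilon_1,\epsilon_2\}$ so that $g_0 \notin \ker\epsilon_3$, then I must produce a partner divisor $D_h$ with $h \in \ker\epsilon_3$ and $d_h>0$: a common point $x \in D_{g_0}\cap D_h$, which exists on $\PP^2$ by B\'ezout, covers $\{\epsilon_1,\epsilon_2\}$ through $g_0$ and $\{\epsilon_3\}$ through $h$, and hence is a base point.

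Producing this partner is the main obstacle, and it is where the type-$C$ hypotheses genuinely enter. Using Corollary \ref{cor:Lchi} on $\PP^2$ (where $\equiv_{num}$ is equality of degrees) one gets $l_{\epsilon_i}=\tfrac12\sum_{g_i=1} d_g$, so $l_{\epsilon_1}=l_{\epsilon_3}=3$ gives $\sum_{g_1=1} d_g = \sum_{g_3=1} d_g = 6$. Subtracting these equalities yields $\sum_{g_1=0,\,g_3=1} d_g = \sum_{g_1=1,\,g_3=0} d_g$, and the left-hand side is at least $d_{g_0}>0$ since $g_0$ has first coordinate $0$ and third coordinate $1$. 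Hence the right-hand side is positive, producing an $h$ with $h_3=0$, so $h \in \ker\epsilon_3$, and $d_h>0$, as required. This settles the remaining case and completes the characterization in all three types.
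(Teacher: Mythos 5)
Your proof is correct, and its skeleton coincides with the paper's: both reduce to Corollary \ref{cor: base locus}, identify the special characters in each type, and then analyse which of them have a given $g$ (with $d_g\neq 0$) in their kernel, using the no-triple-point part of Definition \ref{generic} for the ``if'' direction. Your pointwise bookkeeping via $S(x)$ and $K(g)$ is just a reorganization of the paper's ``the three divisors have no common component, hence their intersection is empty''. The one step where you genuinely diverge is the crux of the converse in type $C$: producing, for $g_0\in\ker\epsilon_1\cap\ker\epsilon_2$ with $g_0\notin\ker\epsilon_3$ and $d_{g_0}>0$, a partner $h\in\ker\epsilon_3$ with $d_h>0$ (after which both arguments invoke B\'ezout). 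The paper obtains the nonvanishing of $\sum_{g\in\ker\epsilon_3}d_g$ geometrically: since $K_{\PP^2}+L_{\epsilon_3}=0$, formula \eqref{eq: pullbackofform} exhibits $\sum_{g\in\ker\epsilon_3}R_g$ as a canonical divisor, which cannot be zero because $p_g=3$. You instead subtract the two degree identities $2l_{\epsilon_1}=\sum_{g\notin\ker\epsilon_1}d_g$ and $2l_{\epsilon_3}=\sum_{g\notin\ker\epsilon_3}d_g$ coming from Corollary \ref{cor:Lchi}, which is purely numerical, stays entirely within the combinatorics of the building data, and incidentally pins down a partner with the extra property $h\notin\ker\epsilon_1$ (not needed, but harmless). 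Both routes are complete and of comparable length; the paper's has the small advantage of explaining \emph{why} the hypothesis $p_g=3$ forces each $\bigcup_{g\in\ker\epsilon_i}D_g$ to be nonempty, while yours makes the whole lemma a consequence of the linear relations among the $d_g$ and $l_\chi$ alone.
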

\begin{proof}
By Corollary \ref{cor: base locus} $|K_X|$ is base-point-free if and only if
\[
\bigcap_{\chi:l_\chi \geq 3}\ \bigcup_{g \in \ker \chi}  D_g=\emptyset.
\]
For type A we deduce $d_g=0$ for all $g \in \ker \epsilon_1$.

In the remaining cases we have three characters $\chi$ with $l_\chi=3$. We first show that for each $\chi$ with $l_\chi=3$ there is at least one $g \in \ker \chi$ such that $d_g \neq 0$. In fact, in this case $K_{\PP^2}+L_\chi=0$ and thus by \eqref{eq: pullbackofform}  $\sum_{g \in \ker \chi}  R_g$ is a canonical divisor. If $\sum_{g \in \ker \chi}  d_g$ vanished, then this canonical divisor would vanish, and thus $\sO_X(K_X)$ would be isomorphic to $\sO_X$, contradicting $p_g=3$. 

For type C we obtain that $|K_X|$ is base-point-free if and only the following intersection of three divisors
\begin{equation}\label{eq: intersection}
\left( \bigcup_{g \in \ker \epsilon_1}  D_g \right) \cap
\left( \bigcup_{g \in \ker \epsilon_2}  D_g \right) \cap
\left( \bigcup_{g \in \ker \epsilon_3}  D_g \right)
\end{equation}
vanishes, and by our last remark all three divisors are not empty.  Then if there is a $g$ such that $d_g \neq 0$, belonging to two different $\ker \epsilon_i$, then any intersection  point among $D_g$ and one of the $D_h\neq 0$ in the kernel of the third $\epsilon_j$ is in (\ref{eq: intersection}), and thus $|K_X|$ is not base-point-free.

Conversely, if $d_g=0$ for all $g \in \bigcup_{1 \leq i < j\leq 3} \ker \epsilon_i \cap \ker \epsilon_j$ then the three divisors we are intersecting in (\ref{eq: intersection}) have no common components, and thus the intersection is empty since $D$ is a smooth normal crossing divisor.

For type B the result follows similarly using that $\ker \epsilon_1 \cap \ker \epsilon_2 = \ker \epsilon_1  \cap \ker\left( \epsilon_1 + \epsilon_2 \right)= \ker \epsilon_2  \cap \ker\left( \epsilon_1 + \epsilon_2 \right)$ .
\end{proof}

We can now classify the $k-$double planes with $p_g=3$, by considering separately the three cases in Definition \ref{def: three cases}.

For type A we obtain a special case of the situation classified in \cite{DuGao}*{Theorem 1.1}.
\begin{proposition}\label{prop: case 1}
The smooth $k-$double planes with $p_g=3$ of type $A$ form four families, one for each value of $k=1,\ldots, 4$. 

In all cases $\pi$ is the canonical map of $X$, $|K_X|=|\pi^*{\sO_{\PP^2}(1)}|$ is base-point-free and
\begin{align*}
l_0=0&&
l_{\epsilon_1}=4&&
l_\chi=&2 \text{ for all remaining } \chi\\
d_g=0&\text{ for all }g \in \ker \epsilon_1&
&&
d_g=&2^{4-k} \text{ for all }g \not\in \ker \epsilon_1\\
\end{align*}
\end{proposition}
\begin{proof}
By (\ref{equation:rita}), for all $\chi\in G^*$, $l_\chi+l_{\chi+\epsilon_1}=l_{\epsilon_1}+\sum_{g \in G} \varepsilon_{\chi,\chi+\epsilon_1}^g d_g \geq l_{\epsilon_1}=4$.

Since for $\chi$ not in $\langle\epsilon_1 \rangle$ we have $l_\chi \leq 2$, it follows $l_\chi=2$. 

It follows moreover $\sum_{g \in G} \varepsilon_{\chi,\chi+\epsilon_1}^g d_g=0$ so $d_g=0$ for all $g$ that are neither in $\ker \chi$ nor in $\ker \chi +\epsilon_1$. Varying $\chi\in G^*$ this shows that $d_g=0$ for all $g \in \ker \epsilon_1$.

Then by Lemma \ref{lemma:bpf} $|K_X|$ is base-point-free. In fact  $H^0(\sO_X(K_X))$ equals $H^0(\sO_X(K_X))^{(\epsilon_1)}$: this implies that the canonical map is composed with $\pi$. In fact since $\pi_* (\sO_X(K_X))^{(\epsilon_1)} \cong \sO_{\PP^2}(1)$, $\pi$ is exactly the canonical map of $X$ and $|K_X|=|\pi^*{\sO_{\PP^2}(1)}|$. 

Finally by Theorem \ref{thm:Dg}, for all $g \not\in \ker \epsilon_1$,  
\begin{multline*}
d_g= \frac{  \sum_{\chi \not\in \ker g} l_\chi - \sum_{\chi \in \ker g} l_\chi }{2^{k-2}}
= \\
=\frac{\left( 4+(2^{k-1}-1)\cdot 2\right)  - \left( 0+(2^{k-1}-1)\cdot 2\right) }{2^{k-2}}=2^{4-k}. 
\end{multline*}
It follows $k\leq 4$.

We leave to the reader the easy check that all $4$ cases do exist by checking that (\ref{equation:rita}) holds for them.
\end{proof}

To study the next two cases, we preliminarily note that Corollary \ref{cor:Lchi} may be rewritten as $l_\chi=\frac12 \sum_{g \not\in \ker \chi} d_g$ or equivalently
\begin{equation}\label{eq: sumdg}
\forall \chi \in G^*\ \ \ \sum_{g \in \ker \chi} d_g=d-2l_\chi
\end{equation}
where $d:=\deg D=\sum_g d_g$.

For type B we obtain only one family.

\begin{proposition}\label{prop: case 2}
The smooth $k-$double planes of type $B$ with $p_g=3$ form one family, with $k=2$.
These surfaces have a canonical system that is base-point-free and
\begin{align*}
l_0=0&&
l_\chi=3 &\text{ for } \chi \neq 0\\
d_0=0&&
d_g=3 &\text{ for } g \neq 0\\
\end{align*}
\end{proposition}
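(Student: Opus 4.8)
The strategy is to turn the type $B$ hypotheses into purely numerical constraints on the degrees $d_g=\deg D_g$ and $l_\chi=\deg L_\chi$, use these to force $k=2$, and then read off all the data. The three tools are the degree form of \eqref{eq: sumdg}, the global identity \eqref{eq: sumLchi} (which in degrees reads $\sum_{\chi}l_\chi=2^{k-2}d$, where $d=\sum_g d_g$), and Lemma \ref{lemma:bpf} for the final base-point-freeness. Throughout I write $W=\ker\epsilon_1\cap\ker\epsilon_2$, a codimension $2$ subspace of $G$, and I use that the three nonzero characters $\epsilon_1,\epsilon_2,\epsilon_1+\epsilon_2$ of $\langle\epsilon_1,\epsilon_2\rangle$ each have $l_\chi=3$, while every $\chi\notin\langle\epsilon_1,\epsilon_2\rangle$ has $l_\chi\le 2$.

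First I would establish the \emph{sharp} lower bound $d\ge 9$. Applying \eqref{eq: sumdg} to each of the three special characters gives
\[
\sum_{g\in\ker\epsilon_1}d_g=\sum_{g\in\ker\epsilon_2}d_g=\sum_{g\in\ker(\epsilon_1+\epsilon_2)}d_g=d-6 .
\]
Summing these three identities and counting, for each $g$, how many of the hyperplanes $\ker\epsilon_1,\ker\epsilon_2,\ker(\epsilon_1+\epsilon_2)$ contain it, one sees that an element of $W$ lies in all three while an element outside $W$ lies in exactly one. Writing $a=\sum_{g\in W}d_g$, this yields $3(d-6)=3a+(d-a)$, hence $a=d-9$. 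As every $d_g\ge 0$ we get $a\ge 0$, so $d\ge 9$.

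The crux is then a single inequality pinch. Combining $d\ge 9$ with the global identity gives $\sum_\chi l_\chi=2^{k-2}d\ge 9\cdot 2^{k-2}$. On the other hand the three special characters contribute $9$ and each of the remaining $2^k-4$ nonzero characters contributes at most $2$, so $\sum_\chi l_\chi\le 9+2(2^k-4)=8\cdot 2^{k-2}+1$. Hence $9\cdot 2^{k-2}\le 8\cdot 2^{k-2}+1$, i.e. $2^{k-2}\le 1$, forcing $k\le 2$; since type $B$ requires $\epsilon_1,\epsilon_2$ to be independent, $k=2$. For $k=2$ there are no characters outside $\langle\epsilon_1,\epsilon_2\rangle$, so $l_\chi=3$ for every $\chi\ne0$ and $\sum_\chi l_\chi=9$, whence $d=9$. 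Now $\ker\epsilon_1=\{0,e_2\}$, $\ker\epsilon_2=\{0,e_1\}$ and $\ker(\epsilon_1+\epsilon_2)=\{0,e_1+e_2\}$, so \eqref{eq: sumdg} (with $d_0=0$) gives $d_{e_2}=d_{e_1}=d_{e_1+e_2}=d-6=3$. Base-point-freeness is then immediate from Lemma \ref{lemma:bpf}, since $\ker\epsilon_1\cap\ker\epsilon_2=\{0\}$ and $d_0=0$.

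It remains to check that this numerical datum is realized by exactly one family. I would verify that the building data $L_0=\sO_{\PP^2}$, $L_{\epsilon_1}=L_{\epsilon_2}=L_{\epsilon_1+\epsilon_2}=\sO_{\PP^2}(3)$, $D_0=0$ and three plane cubics $D_{e_1},D_{e_2},D_{e_1+e_2}$ satisfy \eqref{equation:rita}; since $\Pic(\PP^2)$ is torsion free, each instance of \eqref{equation:rita} is an honest linear equivalence of cubics, checked directly (for example $\chi=\chi'=\epsilon_1$ gives $2L_{\epsilon_1}\equiv D_{e_1}+D_{e_1+e_2}$). By Theorem \ref{thm: rita} such data define a normal cover, and choosing the three cubics smooth, pairwise transverse and with no common point makes the branch divisor a smooth normal crossing divisor, so the cover is a smooth $k$-double plane in the sense of Definition \ref{generic}; these covers are parametrized by triples of cubics and hence form a single unirational family. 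The main obstacle is precisely the sharp bound $d\ge 9$: the weaker bound $d\ge 6$, which follows at once from positivity in one instance of \eqref{eq: sumdg}, is too weak to run the pinch, so the multiplicity count over the three kernels is the essential step.
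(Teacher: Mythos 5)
Your argument is correct and follows essentially the same route as the paper: the sharp bound $d\ge 9$ via $\sum_{g\in\ker\epsilon_1\cap\ker\epsilon_2}d_g=d-9\ge 0$, the pinch against $\sum_\chi l_\chi=2^{k-2}d\le 9+2(2^k-4)$ forcing $k=2$ and $d=9$, then Lemma \ref{lemma:bpf} and a direct check of \eqref{equation:rita} for existence. The only cosmetic difference is that you recover the $d_g$ from \eqref{eq: sumdg} where the paper invokes Theorem \ref{thm:Dg}; both are fine.
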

\begin{proof}
We note that $G$ is the union of the three subgroups $\ker \epsilon_1$, $\ker \epsilon_2$ and $\ker  \left( \epsilon_1 +\epsilon_2 \right)$, which pairwise intersect in  $\ker  \left( \epsilon_1 \cap \epsilon_2 \right)$.
It follows that
\begin{multline*}
\sum_{g \in \ker  \left( \epsilon_1 \cap \epsilon_2 \right)} d_g=\frac12 \left( -d +\sum_{g \in \ker \epsilon_1} d_g +
\sum_{g \in \ker \epsilon_2} d_g +
\sum_{g \in \ker \left( \epsilon_1 +\epsilon_2\right)} d_g \right) \stackrel{\eqref{eq: sumdg}}{=}\\
=d-\left( l_{\epsilon_1} +l_{\epsilon_2} + l_{\epsilon_1 +\epsilon_2}\right)= d-9,
\end{multline*}
so $d \geq 9$.

On the other hand, since $l_{\chi} \leq 2$ for all $\chi \not\in \langle \epsilon_1, \epsilon_2 \rangle$
\[
2^{k-2}d \stackrel{\eqref{eq: sumLchi}}{=}
\sum_{\chi \in G^*} l_	\chi =9+ \sum_{\chi \not\in \langle \epsilon_1, \epsilon_2 \rangle} l_\chi \leq 9+\left( 2^k-4\right) \cdot 2=2^{k+1}+1. 
\]
so $d \leq 8+ \frac1{2^{k-2}}$.

Since by assumption  $k\geq 2$, we conclude that $d=9$ and $k=2$.

The $d_g$ follow by Theorem \ref{thm:Dg}. Since $\epsilon_1 \cap \epsilon_2$ is trivial, Lemma \ref{lemma:bpf} ensures that the canonical system is base-point-free.

We leave to the reader to check that equations (\ref{equation:rita}) are verified.
\end{proof}

For type C we obtain six families. In order to write them clearly we introduce the following rather standard notation.
\begin{notation}
The weight $w(g)$ of an element $g=(g_1,\ldots,g_k) \in \left( \ZZ/2\ZZ \right)^k$ is the number of $g_i$ different from zero. 

For every $h\leq k$ we denote by $w_h(g)$ the number of $g_i$ different from zero with $i \leq h$. 
\end{notation}
In the following we apply this notation to both the elements of $G$ and of $G^*$. 

We note that by  Lemma \ref{lemma:bpf} the canonical system of a k-double plane with $p_g=3$ of type $C$ is base-point-free if and only if $\sum_{g|w_3(g)\leq 1} d_g  = 0$.

Let us set $\epsilon:=\sum_{i=1}^3 \epsilon_i$. We note that $g \in \ker \epsilon$ if and only if $w_3(g)$ is even. It follows that
\[
2\sum_{g|w_3(g)\leq 1} d_g = 3d-\sum_{g} w_3(g)d_g -  \sum_{g|w_3(g)\ even} d_g = 3d-\sum_{i=1}^3 \left( \sum_{g \not\in \ker \epsilon_i} d_g\right) - \sum_{g \in \ker \epsilon} d_g,
\]
from which, by \eqref{eq: sumdg}
\begin{equation}\label{eq: type 3}
\sum_{g|w_3(g)\leq 1} d_g =\frac12 \left( 3d -2\sum l_{\epsilon_i} -d +2l_\epsilon \right)=
d+l_\epsilon-\sum_{i=1}^3 l_{\epsilon_i} =d+l_\epsilon-9.
\end{equation}

We consider first those surfaces whose canonical system is base-point-free.

\begin{proposition}\label{prop: case 3}
The smooth $k-$double planes with $p_g=3$  of type $C$ with canonical system base-point-free form the following five families.
\begin{enumerate}
\item[(C3)] $k=3$, $l_0=0$ and
\begin{align*}
l_{\chi}=3& \text{ if } w(\chi)=1,&
l_{\epsilon}&=1,&
l_\chi=2 &\text{ otherwise};\\
d_g=0& \text{ if } w(g)\leq 1, &
&&
d_g=2 &\text{ otherwise}.\\
\end{align*}
\item[(C4)]  $k=4$, $l_0=0$ and
\begin{align*}
l_{\chi}=3& \text{ if } w(\chi)=1,&
l_{\epsilon}&=1,&
l_\chi=2 &\text{ otherwise};\\
d_g=0& \text{ if } w_3(g)\leq 1, &
&&
d_g=1 &\text{ otherwise}.\\
\end{align*}
\item[(D3)]  $k=3$, $l_0=0$ and
\begin{align*}
l_{\chi}=3& \text{ if } w(\chi)=1,&
l_{\epsilon}&=2,&
l_\chi=1 &\text{ otherwise};\\
d_g=0& \text{ if } w(g)\leq 1, &
d_{e_1+e_2+e_3}&=4,&
d_g=1 &\text{ otherwise}.\\
\end{align*}
\item[(D4)]  $k=4$, $l_0=0$ and
\begin{align*}
l_{\chi}=3& \text{ if } w_3(\chi)=1,&
l_{\chi}=1& \text{ if } w_3(\chi)=w(\chi)=2  &
l_\chi=2 &\text{ otherwise};\\
&&&\text{ or } w_3(\chi)\in \{0,3\}, \chi \not\in \{0,\epsilon\}&&\\
d_g=2& \text{ if } w_3(g)=3,&
d_g=1& \text{ if } w_3(g)=w(g)=2 &
d_g=0 &\text{ otherwise}.\\
\end{align*}
\item[(D5)]  $k=5$, $l_0=0$ and
\begin{align*}
l_{\chi}=3& \text{ if } w_3(\chi)=1,&
l_{\chi}=1& \text{ if } w_3(\chi)=w(\chi)=2  &
l_\chi=2 &\text{ otherwise};\\
&&&\text{ or } w_3(\chi)\in \{0,3\}, \chi \not\in \{0,\epsilon\}&&\\
&&
d_g=1& \text{ if } w_3(g)=w(g)=2&
d_g=0 &\text{ otherwise}.\\
&&& \text{ or } w_3(g)=3,&&
\end{align*}
\end{enumerate}
\end{proposition}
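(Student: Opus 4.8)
\emph{The plan.} The canonical system of a type-$C$ surface is base-point-free exactly when $\sum_{g:\,w_3(g)\le 1} d_g=0$, and by \eqref{eq: type 3} this sum equals $d+l_\epsilon-9$, where $\epsilon=\epsilon_1+\epsilon_2+\epsilon_3$. Since for $k\ge 3$ the character $\epsilon$ lies outside $\{0,\epsilon_1,\epsilon_2,\epsilon_3\}$ we have $l_\epsilon\in\{1,2\}$, so base-point-freeness splits the problem into the two regimes $(d,l_\epsilon)=(8,1)$ and $(d,l_\epsilon)=(7,2)$, which I will show produce the families $C3,C4$ and $D3,D4,D5$ respectively. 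In both regimes I first extract the coarse numerology: \eqref{eq: sumdg} applied to $\epsilon_1,\epsilon_2,\epsilon_3$ gives $d_g=0$ whenever $w_3(g)\le 1$ and forces the total degree carried by each pair class $S_{ij}=\{g:\ g_i=g_j=1,\ g_\ell=0\ \text{for the third index }\ell\le 3\}$ to be $d-6$, while \eqref{eq: sumdg} at $\epsilon$ pins the total degree on $\{g:\ w_3(g)=3\}$ to $2l_\epsilon$.

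For the \textbf{$C$ regime} the argument is short. The global identity $\sum_{\chi}l_\chi=2^{k-2}d$ from \eqref{eq: sumLchi}, together with $d=8$ and $l_\chi\in\{1,2\}$ for all $\chi\notin\{0,\epsilon_1,\epsilon_2,\epsilon_3\}$, forces exactly one non-special character to have $l_\chi=1$; as $l_\epsilon=1$ this character is $\epsilon$, and every remaining non-special character has $l_\chi=2$. With all the $l_\chi$ now known, Theorem \ref{thm:Dg} evaluates to $d_g=2^{4-k}$ when $w_3(g)\ge 2$ and $d_g=0$ otherwise; non-negativity is automatic and integrality forces $k\le 4$, while type $C$ needs $k\ge 3$. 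This gives precisely $C3$ ($k=3$) and $C4$ ($k=4$).

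The \textbf{$D$ regime} is where the real work lies. Now $l_\epsilon=2$, $d=7$, each $S_{ij}$ carries total degree $1$, and $\{g:\ w_3(g)=3\}$ carries total degree $4$. Writing $V=\langle e_4,\dots,e_k\rangle$, the first fact means each pair class contains a \emph{single} branch element $s_{ij}=e_i+e_j+u_{ij}$ with $u_{ij}\in V$, and the second encodes a measure $\mu(v):=d_{e_1+e_2+e_3+v}$ of total mass $4$ on $V$. It then remains to impose that every nonzero character $\chi=(\chi_1,\chi_2,\chi_3;\chi')$ with $\chi'\ne 0$ (these are automatically non-special) satisfies $\sum_{g\notin\ker\chi}d_g\in\{2,4\}$. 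Fixing $\chi'=c\ne 0$ and letting $(\chi_1,\chi_2,\chi_3)$ run over $\mathbb{F}_2^{3}$, the eight resulting conditions collapse to two requirements: (A) the triple $(\langle c,u_{23}\rangle,\langle c,u_{13}\rangle,\langle c,u_{12}\rangle)$ has even weight, and (B) $\mu$ gives mass exactly $2$ to the hyperplane $\ker c\subset V$. Imposing (A) for all $c$ yields $u_{12}+u_{13}+u_{23}=0$, and imposing (B) for all $c$ forces, by summing $(-1)^{\langle c,\cdot\rangle}$ against $\mu$ (all nonzero Fourier coefficients vanish), that $\mu$ be constant equal to $4/|V|$; integrality then gives $|V|\mid 4$, i.e. $k\le 5$. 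The three admissible values $k\in\{3,4,5\}$ reproduce the weight-$3$ data of $D3$ (mass $4$ at one point), $D4$ (mass $2+2$) and $D5$ (mass $1+1+1+1$).

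To conclude the $D$ regime I note connectivity is automatic, since the constant $\mu$ has full support and hence the branch elements span $G$; and the relation $u_{12}+u_{13}+u_{23}=0$ lets me solve $w_i+w_j=u_{ij}$, so the automorphism of $G$ fixing each $\epsilon_i$ and sending $e_i\mapsto e_i+w_i$ (for $i\le 3$) normalizes all $u_{ij}$ to $0$, turning the data into exactly $D3,D4,D5$ while preserving the weight-$3$ part because $\mu$ is constant. A direct check of \eqref{equation:rita} certifies that each of the five sets of building data really defines a cover. I expect the one genuinely delicate step to be the $D$ regime reduction---extracting conditions (A) and (B) from the family of character inequalities and running the character-sum argument that bounds $k$---whereas the $C$ regime and all the numerology are routine once Theorem \ref{thm:Dg}, \eqref{eq: sumdg} and \eqref{eq: sumLchi} are in hand.
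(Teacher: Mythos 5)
Your proposal is correct, and its skeleton --- splitting into the regimes $(d,l_\epsilon)=(8,1)$ and $(7,2)$ via \eqref{eq: type 3}, settling the first by the counting argument on $\sum_\chi l_\chi$ plus Theorem \ref{thm:Dg} --- coincides with the paper's. Where you genuinely diverge is the $D$ regime. The paper stays on the character side: it introduces $V=\bigcap_i\ker g_i\subset G^*$, shows every $\chi$ with $l_\chi=1$ other than the $\epsilon_i+\epsilon_j$ lies in $V$, counts $\#\{\chi\mid l_\chi=1\}=2^{k-2}+1$ via \eqref{eq: sumLchi} to force $\codim V\leq 2$ (hence $g_3=g_1+g_2$), and only then reads off the $d_g$ --- in particular $d_{e_1+e_2+e_3}=2^{5-k}$, whence $k\leq 5$ --- from Theorem \ref{thm:Dg}. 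You instead work on the divisor side, parametrizing the branch data by the $u_{ij}$ and the measure $\mu$ on $\langle e_4,\dots,e_k\rangle$, extracting conditions (A) and (B) from $2l_\chi\in\{2,4\}$, and getting uniformity of $\mu$ (hence $k\leq 5$ by integrality) from vanishing Fourier coefficients rather than from a character count. Both routes are sound; the paper's yields the full list of $l_\chi$ as a by-product, while yours makes the normalization $u_{12}+u_{13}+u_{23}=0\Rightarrow u_{ij}=0$ and the positivity/integrality constraints more transparent. One caution: the ``collapse to (A) and (B)'' is the load-bearing step and really does require running through all eight triples $(\chi_1,\chi_2,\chi_3)$ --- parity alone does not exclude odd weight of $\bigl(\langle c,u_{23}\rangle,\langle c,u_{13}\rangle,\langle c,u_{12}\rangle\bigr)$, since e.g.\ $a=(1,0,0)$ with $(m_0,m_1)=(1,3)$ passes the tests at the triples of weight $0$, $1$ and $3$ and is ruled out only at those of weight $2$ --- so that verification must be written out in full rather than asserted.
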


\begin{proof}
Since we are assuming that the canonical system is base-point-free, by Lemma \ref{lemma:bpf}  and \eqref{eq: type 3}
\[
d=9-l_{\epsilon}
\]
and we have to distinguish two cases, depending if $l_\epsilon=1$ or $2$.

We start with the case $l_\epsilon=1$. Then $d=8$.

By (\ref{eq: sumLchi}) $\sum_{\chi \in G^*} l_\chi = 8 \cdot 2^{k-2}=2^{k+1}$ so the average of the $l_\chi$ equals $2$. 
We know the values of five $l_\chi$: $l_0=0$, $l_{\epsilon}=9-8=1$ and the three $l_{\epsilon_i}=3$; their average equals $2$ as well. Since for all remaining $\chi$, $l_\chi \leq 2$ we conclude that they all equal $2$.  By Theorem \ref{thm:Dg} $d_{e_1+e_2}=2^{4-k}$ so $k=3$ or $4$. In both cases we deduce all other $d_g$ by \ref{thm:Dg} obtaining the cases (C3) and (C4) in the statement. 

Otherwise $l_\epsilon=2$ and $d=7$. Then by (\ref{eq: sumdg}) $\sum_{g \in \ker \epsilon_i} d_g=1$, so for each $i=1,2,3$ there exists a unique $g \in \ker \epsilon_i$ such that $d_{g} \neq 0$, that we denote by $g_i$, and $d_{g_i}=1$. 

We show that the $g_i$ are linearly dependent by proving that the vector subspace 
\[V = \bigcap_{i=1}^3 \ker g_i \subset G^* \]
has at most codimension $2$. 

First  we note that if $\chi$ is a character with $l_\chi=1$
different from  $\epsilon_1+\epsilon_2$, $\epsilon_1+\epsilon_3$ and $\epsilon_2+\epsilon_3$, 
then it belongs to $V$. In fact then for all  $i \in \{1,2,3\}$ it holds $l_{\chi+\epsilon_i} \leq 2$ and then  by (\ref{equation:rita})
\[
\sum_{\substack{g \not\in \ker \chi \\g \in \ker \epsilon_i }}d_g =
\sum_{\substack{g \not\in \ker \chi \\g \not\in \ker \left( \chi + \epsilon_i\right) }}d_g =
l_\chi + l_{\chi + \epsilon_i}-l_{\epsilon_i}\leq 1+2-3=0.
\]

Then we note that there are at least two $\chi$ in $V$ with $l_\chi \neq 1$: $0$ and $\epsilon$.
So, setting $A:=\#\left\{\chi \in G^* | l_\chi=1\right\}$, then $\#V \geq A-3+2=A-1$.
On the other hand $A=2\cdot 2^{k}+1-\sum_{\chi \in G^*} l_\chi\stackrel{\eqref{eq: sumLchi}}{=}2^{k+1}+1-7\cdot 2^{k-2}= 2^{k-2}+1$.  
Therefore
\begin{equation}\label{eq: A}
\# V  \geq A -1= 2^{k-2}.
\end{equation}
proving the claim that the $g_i$ are linearly dependent.  

By Lemma \ref{lemma:bpf} $g_i\neq g_j$ when $i\neq j$, so $g_3=g_1+g_2$, and $V$ has exactly codimension $2$, and \eqref{eq: A} is an equality. 
We complete $\epsilon$ to a basis 
$
\epsilon,\epsilon_4,\ldots, \epsilon_k
$
of $V$. Then $\epsilon_1,\ldots,\epsilon_k$ is a basis of $G^*$ respect to which $V=\left\{\chi  | w_3(\chi) \in \{0,3\} \right\}$. Since \eqref{eq: A} is an equality we know exactly which $l_\chi$ are equal to $1$: those in $V$ different from $0$ and $\epsilon$, plus the three characters  $\epsilon_1+\epsilon_2$, $\epsilon_1+\epsilon_3$ and $\epsilon_2+\epsilon_3$.

Note that respect to the basis $e_1,\ldots,e_k$ of $G$ dual to $\epsilon_1,\ldots, \epsilon_k$ we have
\begin{align*}
g_1&=e_2+e_3,&
g_2&=e_1+e_3,&
g_3&=e_1+e_2.&
\end{align*}

Finally we compute all $d_g$ from the $l_\chi$ using Theorem \ref{thm:Dg}. For $g=e_1+e_2+e_3$ we obtain
\[
d_{e_1+e_2+e_3}=\frac{1}{2^{k-2}} \left( \sum_{w_3(\chi) \text{ odd}} l_\chi  - \sum_{w_3(\chi) \text{ even}} l_\chi  \right)
\]
We note  that $l_\chi$ appears in this expression with the opposite sign of  $l_{\chi +\epsilon}$. 

Since $w_3(\chi)=3-w_3(\chi+\epsilon)$, then $\chi \in V=\left\{\chi  | w_3(\chi) \in \{0,3\} \right\}$ if and only if $\chi+\epsilon \in V$.
We have proved that, if $\chi$ does not belong to $\left\langle \epsilon_1,\epsilon_2,\epsilon_3\right\rangle$ then $l_\chi=1$ if $\chi \in V$ and $l_\chi=2$ otherwise. So the contributions of the $l_\chi$ not in $\left\langle \epsilon_1,\epsilon_2,\epsilon_3\right\rangle$ cancel each other out and
\[
d_{e_1+e_2+e_3}=\frac{1}{2^{k-2}} \left( l_{\epsilon_1}+ l_{\epsilon_2}+ l_{\epsilon_3}+ l_{\epsilon}- l_{\epsilon_1+\epsilon_2}- l_{\epsilon_1+\epsilon_3}- l_{\epsilon_2+\epsilon_3}\right) =\frac{8}{2^{k-2}}=2^{5-k}\]
so $k\leq 5$ and we obtain a family for each $k=3,4,5$. We leave to the reader the computation of the remaining $d_g$, giving the families $(D3)$, $(D4)$ and $(D5)$.
\end{proof}

Finally we consider those $k-$double planes with $p_g=3$ whose canonical system is not base-point-free, and see that they provide exactly one more family.
\begin{proposition}\label{prop: case bp}
The smooth $k-$double planes with $p_g=3$  whose canonical system is not base-point-free are of type C and form one family, with $k=3$, $l_0=0$ and
\begin{align*}
l_{\epsilon_i}=3&&
l_{\epsilon_1+\epsilon_2}=1&&
l_\chi=2 \text{ otherwise}&&
&\\
d_{e_1+e_2+e_3}=3&&
d_{e_1+e_2}=2&&
d_{e_3}=d_{e_1+e_3}=d_{e_2+e_3}=1 &&
d_0=d_{e_1}=d_{e_2}=0\\
\end{align*}
Their canonical system has four simple base points, the preimages of the two points in the intersection of the line $D_{e_3}$ and the conic $D_{e_1+e_2}$.
\end{proposition}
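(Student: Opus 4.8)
The strategy is to first reduce to type $C$, then rigidify the numerical data via \eqref{eq: sumLchi} and \eqref{eq: type 3}, and finally use Theorem \ref{thm:Dg} to pin down the distinguished character and the value of $k$ simultaneously.

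First I would dispose of types $A$ and $B$. In the proof of Proposition \ref{prop: case 1} the vanishing $d_g=0$ for all $g\in\ker\epsilon_1$ is established, and in Proposition \ref{prop: case 2} the subgroup $\ker\epsilon_1\cap\ker\epsilon_2$ is trivial; in both cases Lemma \ref{lemma:bpf} forces $|K_X|$ to be base-point-free. Hence any smooth $k$-double plane with $p_g=3$ and non base-point-free canonical system must be of type $C$. For such a surface, the remark preceding Proposition \ref{prop: case 3} says that being not base-point-free means $\sum_{w_3(g)\le1}d_g\ge1$, which by \eqref{eq: type 3} reads $d+l_\epsilon\ge10$. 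On the other hand \eqref{eq: sumLchi} gives $2^{k-2}d=\sum_\chi l_\chi\le 9+2(2^k-4)=2^{k+1}+1$, so $d\le 8+2^{2-k}\le 8$ for $k\ge3$. Since $l_\epsilon\le2$, these force $d=8$ and $l_\epsilon=2$. Plugging $d=8$ back in, $\sum_\chi l_\chi=2^{k+1}$; as the four characters $0,\epsilon_1,\epsilon_2,\epsilon_3$ contribute $9$ and the remaining $2^k-4$ contribute at most $2$ each, exactly one of them, say $\chi_0$, has $l_{\chi_0}=1$ and all others (including $\epsilon$, so $\chi_0\neq\epsilon$) have $l=2$.

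The crux is locating $\chi_0$ and determining $k$, and here the closed form of Theorem \ref{thm:Dg} is decisive. Rewriting it as $d_g=2^{3-k}\sum_{\chi\in\ker g}(2-l_\chi)$ and inserting the profile of $2-l_\chi$ (namely $2$ at $0$, $-1$ at each $\epsilon_i$, $+1$ at $\chi_0$, and $0$ elsewhere) collapses the sum to
\[
d_g=2^{3-k}\bigl(w_3(g)-1+\delta_g\bigr),\qquad \delta_g=\begin{cases}1&\chi_0\in\ker g\\0&\text{else},\end{cases}
\]
for every $g\neq0$. Non-negativity of $d_g$ fails exactly when $w_3(g)=0$ and $\chi_0\notin\ker g$; requiring $d_g\ge0$ for all $g$ with $w_3(g)=0$ thus forces $\chi_0$ to vanish on $\langle e_4,\ldots,e_k\rangle$, i.e. $\chi_0\in\langle\epsilon_1,\epsilon_2,\epsilon_3\rangle$. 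Since $\chi_0\notin\{0,\epsilon_1,\epsilon_2,\epsilon_3,\epsilon\}$, up to relabelling $\chi_0=\epsilon_1+\epsilon_2$. Evaluating the displayed formula at $g=e_1+e_2+e_3$ then gives $d_{e_1+e_2+e_3}=3\cdot2^{3-k}$, which is a positive integer only for $k=3$.

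With $k=3$ and $\chi_0=\epsilon_1+\epsilon_2$, the displayed formula returns all $d_g$ and one reads off all $l_\chi$, recovering the table in the statement. It remains to confirm that these data define a genuine smooth $k$-double plane: equations \eqref{equation:rita} are checked directly as in the previous propositions, and the transversality of Definition \ref{generic} holds for generic representatives because the degrees involved are small. For the base locus I would apply Corollary \ref{cor: base locus}: the only characters with $l_\chi\ge3$ are $\epsilon_1,\epsilon_2,\epsilon_3$, and after discarding triple points (excluded by Definition \ref{generic}) the three unions $\bigcup_{g\in\ker\epsilon_i}D_g$ meet exactly in $D_{e_3}\cap D_{e_1+e_2}$, the two points where the line meets the conic; since the inertia group $\langle e_3,e_1+e_2\rangle$ has order $4$, each preimage consists of $2$ points, giving the four announced simple base points. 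The main obstacle is precisely the third step: the aggregate identities coming from \eqref{eq: sumdg} leave a genuine one-parameter ambiguity, and it is only the clean closed form for $d_g$ together with the non-negativity and integrality of the $d_g$ that rigidifies the configuration and bounds $k$.
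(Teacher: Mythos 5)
Your proposal is correct and takes essentially the same route as the paper: reduce to type $C$, combine \eqref{eq: sumLchi} with \eqref{eq: type 3} to force $d=8$, $l_\epsilon=2$ and a unique character $\chi_0$ with $l_{\chi_0}=1$, then use Theorem \ref{thm:Dg} together with non-negativity and integrality of the $d_g$ to pin down $\chi_0=\epsilon_1+\epsilon_2$ and $k=3$, and finally read off the base locus as the preimage of $D_{e_3}\cap D_{e_1+e_2}$ (only the transversality of $R_{e_3}$ and $R_{e_1+e_2}$ at those points, needed for the word \emph{simple}, is left implicit, exactly the local computation the paper invokes). Incidentally, your sign is the correct one: $d_h>0$ for $w_3(h)=1$ forces $\chi_0\in\ker h$, consistent with the final data $\epsilon_1+\epsilon_2\in\ker e_3$, whereas the paper's parenthetical ``$\eta\not\in\ker h$ (or $d_h$ would be negative)'' has the inequality reversed.
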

\begin{proof}
By propositions \ref{prop: case 1} and \ref{prop: case 2} these double planes are of type $C$. Thus, using \eqref{eq: sumLchi}
\begin{equation}\label{eq: fail by 1}
d \cdot 2^{k-2}=\sum_{\chi \in G^*} l_\chi =9 + \sum_{\chi \not\in \{\epsilon_j\}} l_\chi \leq 2^{k+1}+1
\end{equation}
from which we deduce, since $k\geq 3$, $d \leq 8+ \frac1{2^{k-2}} \leq 8+\frac12$. So $d \leq 8$.

We recall that the existence of base points for the canonical system is equivalent to $\sum_{g|w_3(g)\leq 1} d_g  \neq 0$.
On the other hand by \eqref{eq: type 3}
\[
\sum_{g|w_3(g)\leq 1} d_g  =d+l_\epsilon-9 \leq l_{\epsilon}-1.
\]
We conclude that
\begin{align*}
l_\epsilon&=2&
d&=8&
\sum_{g|w_3(g)\leq 1} d_g&=1
\end{align*}
and thus there is an unique $h \in G$ with $d_h=1$ and $w_3(h)=1$. 
Note that exactly one of the three characters $\epsilon_j$ is not in $\ker h$. 

The inequality in \eqref{eq: fail by 1} fails to be an equality exactly by $1$. This means that there is exactly one character $\eta$ with $l_\eta=1$.
By the expression of $d_h$ in term of the $l_\chi$ in Theorem \ref{thm:Dg} we deduce that $\eta \not\in \ker h$ (or $d_h$ would be negative) and $d_h=\frac{1}{2^{k-3}}$. So $k=3$.

Using an automorphism of $G$ we can now assume without loss of generality $\eta=\epsilon_1+\epsilon_2$. We have now computed all $l_\epsilon$: we leave to the reader to compute all $d_g$ by applying Theorem \ref{thm:Dg}.

By \eqref{eq: pullbackofform} the canonical system $|K_X|$ is generated by the following three divisors
\begin{align*}
R_{e_3}&+R_{e_2+e_3}&
R_{e_3}&+R_{e_1+e_3}&
&R_{e_1+e_2}
\end{align*}
and then by the smoothness assumption the base locus is the schematic intersection $ R_{e_1+e_2} \cap R_{e_3}$. 

The line $D_{e_3}$ and the conic $D_{e_1+e_2}$ intersect transversally in two points. Above each of them there are two points of $X$, stabilized by the index two subgroup $\left\langle e_1+e_2,e_3 \right\rangle$, the intersection points of $ R_{e_1+e_2} \cap R_{e_3}$. A straightforward local computation shows that $ R_{e_1+e_2}$ and $R_{e_3}$ are transversal.
\end{proof}

\section{The eleven families}\label{sec: the eleven families}

In the previous section we have proved that the smooth k-double planes with $p_g=3$ form $11$ families. In this section we will study these families. 

\begin{paragraph}{\bf Notation}
We will denote each family by a letter and a number. The number is the exponent $k$ of the Galois group, while the letter reminds the type. 
In particular the $4$ families in Proposition \ref{prop: case 1} give surfaces of type A1, A2, A3 and A4, while the surfaces in Proposition \ref{prop: case 2} form the family B2. There are more families of surfaces of type C with the same Galois group, so for these we need to use more letters: we will use the letters C, D and E. 
Precisely the surfaces in Proposition \ref{prop: case 3} are named, as already specified in that statement, as C3, C4, D3, D4 and D5, while the surfaces in Proposition \ref{prop: case bp} form the family E3.
\end{paragraph}

All these surfaces have ample canonical class, since it is numerically the pull-back of an ample class $\PP^2$ (see {\it e.g.} \cite{Rita:paper}*{Proof of Proposition 4.2}). Their irregularity vanishes, for example since their geometric genus is $3$ by construction and the Euler characteristic is $4$ by \cite{Rita:paper}*{(4.8)}. 

For each family we compute the degree of the canonical map.

\begin{theorem}\label{main}
All smooth k-double covers $S$ of the plane with geometric genus $3$ are regular surfaces with ample canonical class. 

They form $11$ unirational families, that we labeled as A1, A2, A3, A4, B2, C3, C4, D3, D4, D5 and E3 in a way that the number in the notation equals $k$. In particular A1 is a family of double planes, A2 and B2 are two families of bidouble planes and so on. 

The degree of the canonical map  $\varphi_{K_S}$ is constant in each family.

We summarize  in the following table the modular dimension (the dimension of its image in the Gieseker moduli space of the surfaces of general type) of each family, and the  values of $K^2_S$ and $\deg \varphi_{K_S}$ of each surface in the family:
\begin{table}[ht!]
\begin{tabular}{|c||ccccccccccc|}
\hline
Family&A1&A2&A3&A4&B2&C3&C4&D3&D4&D5&E3\\
\hline \hline
mod. dim.&36&20&12&8&19&12&8&12&8&6&12\\
$K_S^2$&2&4&8&16&9&8&16&2&4&8&8\\
$\deg \varphi_{K_S}$&2&4&8&16&9&8&16&2&4&8&4\\
\hline
\end{tabular}
\end{table}

In particular the modular dimension of each family  equals $4+2^{6-k}$ with one exception, the family B2, whose dimension is 19. The canonical map is a morphism of degree $K_S^2$ on $\PP^2$ unless $S$ of type $E3$, in which case the canonical map is a rational map of degree $K_S^2-4=4$ undefined at $4$ points. 
\end{theorem}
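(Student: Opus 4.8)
The plan is to build on the classification already established in the previous section. Propositions \ref{prop: case 1}, \ref{prop: case 2}, \ref{prop: case 3} and \ref{prop: case bp} exhibit exactly the eleven families together with their explicit building data $l_\chi$, $d_g$, and they record that $|K_S|$ is base-point-free in every case except E3. The regularity of $S$ and the ampleness of $K_S$ have already been noted in the paragraph preceding the statement. Hence it only remains to compute, family by family, the three numerical invariants displayed in the table, namely $K_S^2$, $\deg\varphi_{K_S}$ and the modular dimension, and to prove unirationality.

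First I would compute $K_S^2$ uniformly. Writing $H$ for the hyperplane class of $\PP^2$ and $d:=\deg D=\sum_g d_g$, the ramification formula for the cover reads $K_S=\pi^*K_{\PP^2}+\sum_{g\neq0}R_g$, and since each $g\neq0$ has order two we have $\pi^*D_g=2R_g$; therefore $2K_S\equiv_{num}\pi^*(2K_{\PP^2}+D)\equiv_{num}(d-6)\pi^*H$. As $(\pi^*H)^2=\deg\pi=2^k$, this gives the closed formula
\[
K_S^2=2^{k-2}(d-6)^2 .
\]
Reading $d$ off each proposition ($d=8$ for the type~A families, C3, C4 and E3; $d=9$ for B2; $d=7$ for D3, D4, D5) reproduces the whole $K_S^2$ row.

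Next I would determine $\deg\varphi_{K_S}$, distinguishing the base-point-free families from E3. For type~A the canonical map is $\pi$ itself by Proposition \ref{prop: case 1}, so $\deg\varphi_{K_S}=\deg\pi=2^k=K_S^2$. For the base-point-free families B2, C3, C4, D3, D4 and D5 one has $K_S=\varphi_{K_S}^*\mathcal{O}_{\PP^2}(1)$; since $K_S^2>0$ the image is two-dimensional, hence a nondegenerate surface in $\PP^2$, i.e.\ all of $\PP^2$, and therefore $\deg\varphi_{K_S}=K_S^2$. For E3 the system $|K_S|$ has four simple base points by Proposition \ref{prop: case bp}; blowing them up turns the canonical map into a morphism onto $\PP^2$ whose degree drops by one for each simple base point, so $\deg\varphi_{K_S}=K_S^2-4=4$. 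This simultaneously proves the last sentence of the theorem.

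Finally, for the modular dimension and unirationality I would use that, the line bundles $L_\chi$ being determined on $\PP^2$, the surfaces in a fixed family are parametrized by the choice of the divisors $D_g\in|\mathcal{O}_{\PP^2}(d_g)|$; thus the parameter space is the rational variety $\mathcal P=\prod_{g:\,d_g>0}|\mathcal{O}_{\PP^2}(d_g)|$, which dominates the image of the family in moduli and hence yields unirationality at once. The dimension of the image should then be $\dim\mathcal P-\dim\Aut(\PP^2)=\sum_g\bigl(\binom{d_g+2}{2}-1\bigr)-8$, and evaluating this with the $d_g$ from the propositions matches every entry of the table (equal to $4+2^{6-k}$ with the single exception B2, which gives $19$). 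The step deserving genuine care, and the one I expect to be the main obstacle, is to prove that this count is exactly the modular dimension and not merely an upper bound: one must show that the classifying map $\mathcal P\to\mathcal M$ is generically finite modulo $\Aut(\PP^2)\times\Aut(G)$, equivalently that a general $S$ in the family recovers its $(\ZZ/2)^k$-cover structure $\pi$ up to these finite ambiguities. For type~A this is automatic because $\pi=\varphi_{K_S}$ is intrinsic; for the other families one has to argue that the subgroup $G<\Aut(S)$, or the projection $\pi$ onto the quotient $\PP^2$, is reconstructed from the intrinsic geometry of $S$, so that the fibres of $\mathcal P\to\mathcal M$ are precisely the $\Aut(\PP^2)\times\Aut(G)$-orbits and the dimension count is sharp.
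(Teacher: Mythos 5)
Your computations of $K_S^2$, of the regularity and ampleness, and of the degree of the canonical map follow essentially the same route as the paper: the formula $K_S^2=2^{k-2}(d-6)^2$ agrees with Pardini's formula $K^2=2^k\bigl(-3+\tfrac12\sum_g d_g\bigr)^2$ used there, and the argument that a base-point-free (or, for E3, blown-up) canonical system with movable part of positive self-intersection and $p_g=3$ gives a generically finite morphism onto $\PP^2$ of degree equal to that self-intersection is exactly the paper's. The parametrization by $\prod_{g}|\sO_{\PP^2}(d_g)|$ modulo $\PGL(3)$ and the resulting unirationality and dimension count are also the paper's.

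The one genuine gap is the step you yourself flag as ``the main obstacle'': you leave unproved, for the families other than type A, that $\sum_g\dim|D_g|-8$ is the modular dimension and not merely an upper bound, and you propose to close it by reconstructing the subgroup $G<\Aut(S)$ from the intrinsic geometry of $S$. That is more than is needed, and for several families (e.g.\ D3, where $\pi$ is \emph{not} the canonical map) it is not clear how you would carry it out. The paper's resolution is much softer: since every $S$ in these families is of general type, $\Aut(S)$ is finite, hence contains only finitely many subgroups isomorphic to $(\ZZ/2\ZZ)^k$; each realization of $S$ as a smooth $k$-double plane is determined, up to the $\PGL(3)$-action already quotiented out and finitely many discrete choices, by such a subgroup together with the induced branch data. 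So the classifying map from $\bigl(\prod_g|\sO_{\PP^2}(d_g)|\bigr)/\PGL(3)$ to the Gieseker moduli space has finite fibres --- one does not need the fibres to be single orbits, only finite --- and the dimension of the image equals the dimension of the source. With that observation (plus the routine remark that a general point of $\prod_g|\sO_{\PP^2}(d_g)|$ has finite $\PGL(3)$-stabilizer, so the quotient really has dimension $\sum_g\dim|D_g|-8$), your count becomes a proof.
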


\begin{proof}
Each surface $S$ is a Galois cover $\pi \colon S \rightarrow \PP^2$. By the Leray spectral sequence, $H^1(S, \sO_S) \cong H^1(\PP^2, \pi_* \sO_S)\cong \bigoplus_\chi H^1\left( \PP^2, \sL_\chi^{-1}\right)$. Since every line bundle on $\PP^2$ has trivial first cohomology group, it follows $h^1(S, \sO_S)=0$.

The value of the self-intersection of the canonical class follows by the formula (see \cite{Rita:paper}*{(4.8)})
\[
K^2=2^k\left(-3+\frac12 \sum_{g \in G} d_g \right)^2
\]
By Propositions \ref{prop: case 1}, \ref{prop: case 2}, \ref{prop: case 3}, \ref{prop: case bp} the canonical system of $S$ is base point free unless $S$ is of type E3, in which case it has four simple base points. So (blowing up the base points in this last case) we get a surface with canonical system having movable part of self intersection  as in the second line of the table above, so strictly positive. Then the canonical map is not composed with a pencil. Since $p_g=3$ then the canonical map of this surface is a morphism on $\PP^2$ of the given degree.

The families are parametrized by a Zariski open subset of a product of projective spaces, the complete linear systems to which the divisors $|D_g|$, quoted by the faithful action of $\PGL(3)$, a group of dimension $8$. Since the surfaces are of general type, their automorphism group is finite and therefore it contains only finitely many subgroups of the form $(\ZZ/2\ZZ)^k$, which implies that the map from this quotient to the Gieseker moduli space of the surfaces of general type is finite. So the modular dimension of each family equals
\[
-8+\sum \dim |D_g| 
\]  
which gives the modular dimensions in the table above.
As an example, the family $E3$ depends on the choice of three lines, a conic and a cubic so its modular dimension is
\[
-8 + 3 \cdot 2+ 5 +9=12.
\]
\end{proof}

For each family we will first give explicit equations of the surfaces embedded in a suitable weighted projective space, computed  by using the equations in  \cite{Cetraro}*{Section 6} (see also \cite{ManettiInventiones}*{Section 3.3}) as follows. 

We consider a weighted projective space whose first three variables  $x_0,x_1,x_2$ of weight $1$. 
The group acts trivially on them: in fact the $k$-double cover is the map on $\PP^2$ given by them.
Each branch divisor divisor $D_g$,  $g=\sum_1^k i_je_j$, is defined by a homogeneous polynomial in the $x_j$, the polynomial $f_{i_1\cdots i_k}(x_j) \in \CC[x_0,x_1,x_2]$. If $D_g=0$ then $f_{i_1\cdots i_k}(x_j)=1$.

Then we add variables  $y_{i_1\cdots i_k}$, $i_j \in \{ 0,1 \}$, meaning that $e_j$ acts on $y_{i_1\cdots i_k}$ via multiplication by $(-1)^{i_j}$. The equations
\begin{align}\label{eq: equations}
y_{r_1\cdots r_k} y_{s_1\cdots s_k}= y_{t_1\cdots t_k} \prod_{\substack{\sum i_j r_j, \sum i_j s_j\\ \text{ both odd} }} f_{i_1\cdots i_k} &&&
\text{when all } r_j+s_j+t_j \text{ are even}
\end{align}
define an embedding of these surfaces in the weighted projective with variables $x_j, y_{i_1\cdots i_k}$. The weight of the variable  $y_{i_1\cdots i_k}$ is  the positive integer $l_{\sum_j i_j\epsilon_j}$.

Sometimes these equations allow to eliminate some variables, embedding the surfaces in a weighted projective space of smaller dimension. For example for the family A2 we find the equation $y_{11}y_{01}=y_{10}$, using it to eliminate $y_{10}$ gives an embedding in a smaller dimensional weighted projective space. In the following we will eliminate all the variables that we can eliminate, to give simpler equations.

Then we will discuss all "intermediate" quotients, the quotients of these surfaces by subgroups of the Galois group of the cover, with a focus on $K3$ surfaces and symplectic involutions. 

\newpage

\subsection{Family A1}
These surfaces have $K^2=2$. 

They are the hypersurfaces of degree $8$ in $\PP(1^3,4)$, with variables $x_0,x_1,x_2,y_1$,
\[y_1^2=f_1(x_j),
\]
with $\deg f_1=8$.
 
These are the Horikawa surfaces in \cite{Hor1}[Theorem 1.6.(i)].

\subsection{Family A2}
These surfaces have $K^2=4$. 

These are the complete intersections of two quartics in $\PP(1^3,2^2)$, with variables $x_0,x_1,x_2,y_{11},y_{01}$,
\begin{align*}
y_{11}^2&=f_{10}(x_j)&
y_{01}^2&=f_{11}(x_j)&
\end{align*}
with $\deg f_\bullet=4$.

There are three intermediate quotients: the quotient by $e_1$ and $e_1+e_2$ are double planes branched on quartics, so del Pezzo surfaces of degree $2$.
The quotient by $e_2$ is a double plane branched on both quartics, so a degeneration of the family A1, a Horikawa surface with $16$ nodes.

This family is in \cite{DuGao}*{Theorem 1.1.(5)}. These surfaces were also studied by Horikawa, see \cite{Hor4}*{Theorem 2.1}.

\subsection{Family B2} 
These surfaces have $K^2=9$. 

They are embedded in $\PP(1^3,3^3)$, with variables $x_0,x_1,x_2,y_{10},y_{01},y_{11}$, defined by the equations
\[\Rank
\begin{pmatrix}
f_{10}(x_j)&y_{10}&y_{11}\\
y_{10}&f_{11}(x_j)&y_{01}\\
y_{11}&y_{01}&f_{01}(x_j)\\
\end{pmatrix}=1
\]
with $\deg f_\bullet=3$.

The three intermediate quotients are double planes branched on the union of two cubics: three $K3$ surfaces with $9$ nodes.

We met this family in \cite{bidouble}*{Example 6} and \cite{Alice}*{Proposition 6.3}. They are also studied in \cite{MR4333030} and \cite{AliceMatteo}.

\newpage 

\subsection{Family A3}
These surfaces have $K^2=8$. 

They are embedded in $\PP(1^3,2^6)$, with variables $x_0$, $x_1$, $x_2$, $y_{010}$, $y_{001}$, $y_{110}$, $y_{101}$, $y_{011}$, $y_{111}$,  defined by the equations
\[\Rank
\begin{pmatrix}
f_{111}(x_j)&y_{010}&y_{001}&y_{111}\\
y_{010}&f_{110}(x_j)&y_{011}&y_{101}\\
y_{001}&y_{011}&f_{101}(x_j)&y_{110}\\
y_{111}&y_{101}&y_{110}&f_{100}(x_j)\\
\end{pmatrix}=1
\]
with $\deg f_\bullet=2$.

The quotients by $\ker \epsilon_1$ are double planes branched on the union of $4$ conics, degenerations of the family A1 with $24$ nodes. The quotients by each of the other $6$ subgroups of index $2$ are double planes branched on the union of $2$ conics, del Pezzo surfaces of degree $2$ with $4$ nodes.

The quotients by a subgroup $\langle g \rangle$ of index $4$ behave differently according to if $g$ belongs to $\ker \epsilon_1$ or not.
If $g \in \ker \epsilon_1$ the quotient is a degeneration of the family A2 with $16$ nodes. Otherwise, for the remaining four $g$, the quotients are 2-double planes such that each of the three branching divisors is a conic. By, {\it e.g} \cite{rigidity}*{Propositions 2.4-2.5 and their proof} they have $p_g=0$ and bicanonical sheaf trivial, so they are Enriques surfaces.

These surfaces are in \cite{DuGao}*{Theorem 1.1.(3)}, where the authors give them through equations of a different (not normal) birational model.

\newpage

\subsection{Family C3}
These surfaces have $K^2=8$. 

They are embedded in $\PP(1^4,2^3)$, with variables $x_0,x_1,x_2,y_{111},y_{110},y_{101},y_{011}$, defined by the equations
\begin{align*}
\Rank
\begin{pmatrix}
f_{110}(x_j)&y_{011}&y_{101}\\
y_{011}&f_{101}(x_j)&y_{110}\\
y_{101}&y_{110}&f_{011}(x_j)\\
\end{pmatrix}&=1&
y_{111}^2=f_{111}(x_j)
\end{align*}
with $\deg f_\bullet=2$. 

The Galois group has seven subgroups of index $2$, the three of the form $\ker \epsilon_i$, the three of the form $\ker \epsilon_i+\epsilon_j$, and $\ker \epsilon$.

The quotients by a subgroup of the form $\ker \epsilon_i$ are double planes branched on the union of $3$ conics, so K3 surfaces with $12$ nodes. 
The quotients by a subgroup of the form $\ker \epsilon_i +  \epsilon_j$ are double planes branched on the union of $2$ conics, so del Pezzo surfaces of degree $2$ with $4$ nodes. The quotients by  $\ker \epsilon$ are double planes branched on one conic, so $\PP^1 \times \PP^1$. 

The quotients by a subgroup $\langle g \rangle$ of index $4$ are $2-$double planes as follows.
If $g=e_1+e_2+e_3$ then the three branching divisors are three smooth conics, so the quotients are smooth Enriques surfaces. If $g$ is of the form $e_i+e_j$ then one of the branching divisors is empty, one is a smooth conic, and the last is union of two conics: the quotients are K3 surfaces with $8$ nodes.
If $g$ is one of the $e_i$ then two divisors are conics whereas the third is the union of two conics: they are surfaces with $K^2=4$, $p_g=2$ and $8$ nodes.

Then each surface in this family dominates six different K3 surfaces. Let us give names to them. Let $U_{i,j}$ be the K3 with $8$ nodes obtained quoting by $\langle e_i+e_j \rangle$ and let $V_{k}$ be the K3 with $12$ nodes obtained quoting by  $\ker \epsilon_k$. 
Then these K3 are naturally subdivided in three pairs by double covers  $V_{i,j} \to U_k$ (here $k\not\in\{i,j\}$) branched on $8$ nodes and nowhere else, quotient of  $V_{i,j}$ by the symplectic involution induced by $e_i$.
The $V_{i,j}$ are special cases of the K3 surfaces considered in \cite{vGS}*{3.5}, where the plane quartic considered there splits as union of two conics.

\newpage

\subsection{Family D3}
These surfaces have $K^2=2$. 

They are embedded in $\PP(1^6,2)$ with variables $x_0,x_1,x_2,y_{110},y_{101},y_{011},y_{111}$,  defined by the equations
\begin{align*}
\Rank
\begin{pmatrix}
f_{110}(x_j)&y_{011}&y_{101}\\
y_{011}&f_{101}(x_j)&y_{110}\\
y_{101}&y_{110}&f_{011}(x_j)\\
\end{pmatrix}&=1&
y_{111}^2=f_{111}(x_j)
\end{align*}
with $\deg f_{110}=\deg f_{101}=\deg f_{011}=1$ and $\deg f_{111}=4$. Note that these equations are identical to those of the family C3, the only difference being in the degrees.

The quotients by a subgroup of the form $\ker \epsilon_i$ are double planes branched on the union of $2$ lines and one quartic, so K3 surfaces  with $9$ nodes. 
The quotients by a subgroup of the form $\ker \epsilon_i +  \epsilon_j$ are double planes branched on the union of $2$ lines, so del Pezzo surfaces of degree $8$ with $1$ node. The quotients by  $\ker \epsilon$ are double planes branched on one quartic, so smooth del Pezzo surfaces of degree $2$. 

The quotients by a subgroup $\langle g \rangle$ of index $4$ are $2-$double planes as follows.
If $g=e_1+e_2+e_3$ then the three branching divisors are lines, so the quotients are projective planes $\PP^2$. If $g$ is of the form $e_i+e_j$ then one of the branching divisors is empty, one is a smooth quartic, and the last is union of two lines: the quotients are K3 surfaces with $2$ nodes.
If $g$ is one of the $e_i$ then two divisors are lines whereas the third is the union of a line and a quartic: they are surfaces with $K^2=1$, $p_g=2$ and $8$ nodes.

Then each surface in this family dominates six different K3 surfaces naturally subdivided in three pairs as in the previous case. More precisely, let $U_{i,j}$ be the K3 with $2$ nodes obtained quoting by $\langle e_i+e_j \rangle$ and let $V_{k}$ be the K3 with $9$ nodes obtained quoting by  $\ker \epsilon_k$. 
Then we have double covers  $V_{i,j} \to U_k$, $k\not\in\{i,j\}$, branched on $8$ nodes and nowhere else, quotient of  $V_{i,j}$ by the symplectic involution induced by $e_i$. These are again special cases of the K3 surfaces considered in \cite{vGS}*{3.5}, where the plane conic considered there splits as union of two lines.

We finally note that, since the quotient by $e_1+e_2+e_3$ represents these surfaces as double cover of the plane, these surfaces are a degeneration of the surfaces in the family A1, special Horikawa surfaces in the family of \cite{Hor1}*{Theorem 1.6.(i)} with extra automorphisms.

\newpage

\subsection{Family E3}
These surfaces have $K^2=8$. 

They are embedded in $\PP(1^4,2^3,3^2)$, with variables $x_0,$ $x_1,$ $x_2,$ $y_{110},$ $y_{101},$ $y_{011},$ $y_{111},$ $y_{100},$ $y_{010}$, defined by the equations
\[\Rank
\begin{pmatrix}
	f_{110}f_{111}&y_{100}&y_{010}\\
	y_{100}&f_{101}&y_{110}\\
	y_{010}&y_{110}&f_{011}\\
\end{pmatrix}=1, \qquad \Rank\begin{pmatrix}
f_{110}f_{001}&y_{011}&y_{101}\\
y_{011}&f_{101}&y_{110}\\
y_{101}&y_{110}&f_{011}\\
\end{pmatrix}=1,
\]
\[
\Rank\begin{pmatrix}
	f_{111}&y_{100}&y_{111}\\
	y_{100}&f_{110}f_{101}&y_{011}\\
	y_{111}&y_{011}&f_{001}\\
\end{pmatrix}=1, \qquad \Rank\begin{pmatrix}
f_{111}&y_{010}&y_{111}\\
y_{010}&f_{110}f_{011}&y_{101}\\
y_{111}&y_{101}&f_{001}\\
\end{pmatrix}=1.
\]
with $\deg f_{101}=\deg f_{011}=\deg f_{001}=1$, $\deg f_{110}=2$ and $\deg f_{111}=3$.

The quotients by the subgroup $\ker \epsilon_1$  or $\ker \epsilon_2$ are double planes branched on the union of one line, one conic and one cubic, so K3 surfaces  with $11$ nodes.
The quotients by the subgroup $\ker \epsilon_3$ are branched on the union of three lines and one cubic, so K3 surfaces  with $12$ nodes.
The quotients by  $\ker \epsilon_1 +  \epsilon_2$ are branched on the union of two lines, so del Pezzo surfaces  of degree $8$ with $1$ node.
The quotients by  $\ker \epsilon_1 +  \epsilon_3$ or  $\ker \epsilon_2 +  \epsilon_3$ are branched on the union of two lines and a conic, so del Pezzo surfaces of degree $2$  with $5$ nodes.
The quotients by $\ker \epsilon$
are branched on the union of one line and one cubic, so del Pezzo surfaces of degree $2$ with $3$ nodes. 

The quotients by a subgroup $\langle g \rangle$ of index $4$ are $2-$double planes as follows.
If $g=e_1+e_2+e_3$ then two of the branching divisors are lines and the third is the union of a line and a conic, so the quotients are del Pezzo surfaces of degree $1$ with $4$ nodes. If $g=e_1+e_2$ then one divisor is empty, the second is the union of two lines,  the third is the union of a line and a cubic, and the quotients are K3 surfaces with $8$ nodes. If $g$ is $e_1+e_3$ or  $e_2+e_3$ then one of the branching divisors is a line, one is a cubic, and the last is union of a line and a conic: the quotients have $K^2=p_g=1$ and $4$ nodes.
If $g=e_3$ then two divisors are lines and the third is union of a conic and a cubic: the quotients have $K^2=1$, $p_g=2$ and $12$ nodes.
If $g$ is $e_1$ or $e_2$ then one divisor is the union of two lines, one is a conic and the last is the union of a line and a cubic, giving surfaces with $K^2=4$, $p_g=2$ and $8$ nodes.

Then each surface in this family dominates four different K3 surfaces. We get only one symplectic involution by the construction, on the K3 surface with $8$ nodes quotient by $e_1+e_2$. The symplectic involution is induced by $e_1$, and the quotient is the K3 with $12$ nodes obtained by $\ker \epsilon_3$. The two K3 surfaces with $11$ nodes are both dominated by a surface of general type with $K^2=p_g=1$.

\newpage

\subsection{Family A4} 
These surfaces have $K^2=16$. 

They are embedded in $\PP(1^3,2^{14})$ with variables $x_0,$ $x_1,$ $x_2,$ $y_{1111},$ $y_{0100},$ $y_{0010},$ $y_{0001},$ $y_{1011},$ $y_{1101},$ $y_{1110},$ $y_{0110},$ $y_{0101},$ $y_{0011},$ $y_{1010},$ $y_{1100},$ $y_{0111},$ $y_{1001}$, defined by the equations
\[\Rank
\begin{pmatrix}
f_{1000}f_{1011}&y_{1111}&y_{1011}&y_{1100}\\
y_{1111}&f_{1110}f_{1101}&y_{0100}&y_{0011}\\
y_{1011}&y_{0100}&f_{1100}f_{1111}&y_{0111}\\
y_{1100}&y_{0011}&y_{0111}&f_{1010}f_{1001}\\
\end{pmatrix}=1
\]
\[\Rank
\begin{pmatrix}
f_{1000}f_{1101}&y_{1111}&y_{1101}&y_{1010}\\
y_{1111}&f_{1110}f_{1011}&y_{0010}&y_{0101}\\
y_{1101}&y_{0010}&f_{1010}f_{1111}&y_{0111}\\
y_{1010}&y_{0101}&y_{0111}&f_{1100}f_{1001}\\
\end{pmatrix}=1
\]

\[\Rank
\begin{pmatrix}
f_{1000}f_{1110}&y_{1111}&y_{1110}&y_{1001}\\
y_{1111}&f_{1101}f_{1011}&y_{0001}&y_{0110}\\
y_{1110}&y_{0001}&f_{1001}f_{1111}&y_{0111}\\
y_{1001}&y_{0110}&y_{0111}&f_{1100}f_{1010}\\
\end{pmatrix}=1
\]

\[\Rank
\begin{pmatrix}
f_{1100}f_{1101}&y_{0100}&y_{0110}&y_{1010}\\
y_{0100}&f_{1110}f_{1111}&y_{0010}&y_{1110}\\
y_{0110}&y_{0010}&f_{1010}f_{1011}&y_{1100}\\
y_{1010}&y_{1110}&y_{1100}&f_{1000}f_{1001}\\
\end{pmatrix}=1
\]

\[\Rank
\begin{pmatrix}
f_{1100}f_{1110}&y_{0100}&y_{0101}&y_{1001}\\
y_{0100}&f_{1101}f_{1111}&y_{0001}&y_{1101}\\
y_{0101}&y_{0001}&f_{1001}f_{1011}&y_{1100}\\
y_{1001}&y_{1101}&y_{1100}&f_{1000}f_{1010}\\
\end{pmatrix}=1
\]

\[\Rank
\begin{pmatrix}
f_{1010}f_{1110}&y_{0010}&y_{0011}&y_{1001}\\
y_{0010}&f_{1011}f_{1111}&y_{0001}&y_{1011}\\
y_{0011}&y_{0001}&f_{1001}f_{1101}&y_{1010}\\
y_{1001}&y_{1011}&y_{1010}&f_{1000}f_{1100}\\
\end{pmatrix}=1
\]

\[\Rank
\begin{pmatrix}
f_{1000}f_{1111}&y_{1011}&y_{1101}&y_{1110}\\
y_{1011}&f_{1100}f_{1011}&y_{0110}&y_{0101}\\
y_{1101}&y_{0110}&f_{1010}f_{1101}&y_{0011}\\
y_{1110}&y_{0101}&y_{0011}&f_{1001}f_{1110}\\
\end{pmatrix}=1
\]
with $\deg f_\bullet=1$.

The quotients by $\ker \epsilon_1$ are double planes branched on the union of $8$ lines, degenerations of the family A1 with $28$ nodes. The quotients by each of the other $6$ subgroups of index $2$ are double planes branched on the union of $4$ lines. They are del Pezzo surfaces of degree $2$ with $6$ nodes.

The quotients by a subgroup $H$ of index $4$ behave differently according to if $H$ is contained in $\ker \epsilon_1$ or not.
If $H \subset \ker \epsilon_1$ the quotients are degenerations of the family A2 with $24$ nodes. Otherwise, the quotients are Enriques surfaces with $6$ nodes.

The quotients by a subgroup $\langle g \rangle$ of index $8$ also behave differently according to if $g$ belongs to $\ker \epsilon_1$ or not.
If $g \in \ker \epsilon_1$ the quotients are degenerations of the family A3 with $32$ nodes. Otherwise the quotients are numerical Campedelli surfaces, surfaces with $p_g=0$, $K^2=2$ and ample canonical class.

Note that these surfaces are then double covers of numerical Campedelli surfaces: in fact they were first found by Persson in this way in \cite{Ulf}*{Ex. 5.8}. 
They are also in \cite{DuGao}*{Theorem 1.1.(1)}, where the authors give them through equations of a different (not normal) birational model.

\newpage

\subsection{Family C4}
These surfaces have $K^2=16$. 

They are embedded in $\PP(1^4,2^{11})$ with variables $x_0,$ $x_1,$ $x_2,$ $y_{1110},$ $y_{1001},$ $y_{0011},$ $y_{0101},$ $y_{1111}$ $y_{0111},$ $y_{1101},$ $y_{1011},$ $y_{0001},$ $y_{1100},$ $y_{1010},$ $y_{0110}$  defined by the equations
\begin{align*} 
\Rank \left(
\begin{smallmatrix}
f_{1110}&y_{1001}&y_{0101}&y_{0011}&y_{1111}&y_{1110}\\
y_{1001}&f_{1010}f_{1100}f_{0111}&y_{1100}f_{1100}&y_{1010}f_{1010}&y_{0110}f_{0111}&y_{0111}\\
y_{0101}&y_{1100}f_{1100}&f_{1100}f_{0110}f_{1011}&y_{0110}f_{0110}&y_{1010}f_{1011}&y_{1011}\\
y_{0011}&y_{1010}f_{1010}&y_{0110}f_{0110}&f_{1010}f_{0110}f_{1101}&y_{1100}f_{1101}&y_{1101}\\
y_{1111}&y_{0110}f_{0111}&y_{1010}f_{1011}&y_{1100}f_{1101}&f_{1101}f_{1011}f_{0111}&y_{0001}\\
y_{1110}&y_{0111}&y_{1011}&y_{1101}&y_{0001}&f_{1111}\\
\end{smallmatrix} \right)=1
\end{align*}
\begin{align*}
	\Rank
	\begin{pmatrix}
	f_{1100}f_{1101}&y_{0110}&y_{1010}\\
	y_{0110}&f_{1010}f_{1011}&y_{1100}\\
	y_{1010}&y_{1100}&f_{0110}f_{0111}\\
	\end{pmatrix}=1
\end{align*}
with $\deg f_\bullet=1$.

We describe only the intermediate quotients that are K3 surfaces. 

We find three intermediate K3 surfaces with $15$ nodes, the quotients by $\ker \epsilon_i$, $i=1,2,3$, double planes branched on six lines. Each of them is double covered by a $K3$ with $14$ nodes, the quotient by $\left( \ker \epsilon_i \right) \cap \left( \ker \epsilon_j + \epsilon_k \right)$, $\{i,j,k\}=\{1,2,3\}$ with a symplectic involution by $e_j$. Note that each of these last surfaces is double covered by two further intermediate quotients with $p_g=1$, the quotients by $e_j + e_k$ and $e_j + e_k+e_4$, both giving surfaces with $K$ ample, $K^2=2$ and $8$ nodes. There are special case f the "special Horikawa surfaces" considered in \cite{MR4292206}.
 These pairs of $K3$ are again a specialization of \cite{vGS}*{3.5}, where all plane curves splits as union of lines.

\newpage

\subsection{Family D4}
These surfaces have $K^2=4$.

They are embedded in $\PP(1^8)=\PP^7$ with variables $x_0$, $x_1$, $x_2$, $y_{1100}$, $y_{1010}$, $y_{0110}$, $y_{0001}$, $y_{1111}$ we take the surfaces defined by the equations
\begin{align*}
\Rank
\begin{pmatrix}
f_{1100}(x_j)&y_{0110}&y_{1010}\\
y_{0110}&f_{1010}(x_j)&y_{1100}\\
y_{1010}&y_{1100}&f_{0110}(x_j)\\
\end{pmatrix}&=1&
\begin{aligned}
y_{1111}^2&=f_{1110}(x_j)\\
y_{0001}^2&=f_{1111}(x_j)
\end{aligned}
\end{align*}
with $f_\bullet$ general of respective degrees  $\deg f_{1100}=\deg f_{1010}=\deg f_{0110}=1$  and $\deg f_{1110}=\deg f_{1111}=2$. 

The intermediate quotients that are K3 surfaces form three towers of three K3s corresponding to the chain of subgroups, for $i,j \leq 3$, $i\neq j$
\[
\langle e_i+e_j\rangle \subset \langle e_i+e_j, e_4\rangle \subset \langle e_i, e_j, e_4\rangle 
\]
giving three towers of double covers between K3 surfaces $U_{i,j} \to V_{i,j} \to W_{i,j}$ with respectively $4$, $10$ and $13$ nodes.

\subsection{Family D5}
These surfaces have $K^2=8$. 

They are embedded in $\PP(1^{12})=\PP^{11}$ with variables $x_0,$ $x_1,$ $x_2,$ $y_{11000},$ $y_{10100},$ $y_{01100},$ $y_{00010},$ $y_{00001},$ $y_{11110},$ $y_{11101},$ $y_{00011},$ $y_{11111}$ defined by the equations
\begin{align*}
	\Rank
	\begin{pmatrix}
	f_{11111}(x_j)&y_{00010}&y_{00001}&y_{11111}\\
	y_{00010}&f_{11110}(x_j)&y_{00011}&y_{11101}\\
		y_{00001}&y_{00011}&f_{11101}(x_j)&y_{11110}\\
		y_{11111}&y_{11101}&y_{11110}&f_{11100}(x_j)\\
		\end{pmatrix}&=1
\end{align*}
\begin{align*}
	\Rank
	\begin{pmatrix}
	f_{11000}(x_j)&y_{01100}&y_{10100}\\
	y_{01100}&f_{10100}(x_j)&y_{11000}\\
	y_{10100}&y_{11000}&f_{01100}(x_j)\\
	\end{pmatrix}&=1
	\end{align*}
with $f_\bullet$ general of degree $1$.

There are $48$ intermediate quotients that are $K3$ surfaces, divided in three families, each of them giving several towers of three  consecutive double covers between (four) K3 surfaces. One for each pair $i \neq j$, $i,j =1,2,3$. Namely for each pair of subgroups $H_4 \subset H_8$ with $|H_d|=d$ and
\[ 
\langle e_i+e_j\rangle \subset  H_4 \subset  H_8  \subset \langle e_i, e_j, e_4, e_5\rangle 
\]
we obtain a tower of $4$ K3 surfaces with respectively $8$, $12$, $14$ and $15$ nodes, with the surfaces with $8$ and $15$ nodes depending only on $i$ and $j$.

\section{Burgers}
We recall Laterveer's definition \cite{Robert}*{Definition 3.1}

\begin{definition}
A surface $S$ is called a triple K3 burger if the following conditions are satisfied: 
\begin{enumerate}
\item[(0)] $S$ is minimal, of general type, with $q = 0$ and $p_g = 3$;
\item[(i)] there exist involutions $\sigma_j \colon S \rightarrow S$ ($j = 0, 1, 2$) that commute with one another, and such that the quotients
$\overline{X}_j :=S/\left\langle \sigma_j \right\rangle$ ($j=0,1,2$) are birational to a K3 surface $X_j$;
\item[(ii)] there is an isomorphism
\[
\left( (p_0)^* ,(p_1)^* ,(p_2)^* \right) \colon H^2 (\overline{X}_0,\sO) \oplus H^2 (\overline{X}_1,\sO)\oplus H^2(\overline{X}_2,\sO) \rightarrow H^2 (S,\sO),\]
where $p_j \colon S\rightarrow \overline{X}_j$ denotes the quotient morphism.
\end{enumerate}
\end{definition}

Laterveer's original definition included also the third condition (iii) that the involutions respect the canonical divisor: $\sigma_j^*|K_S|=|K_S|$. We removed that because it is automatic since the pull-back of a canonical divisor by an automorphism  is the divisor of the pull-back of the corresponding differential form. 

Our surfaces not of type A are natural candidates to be triple K3 burger. In fact
\begin{proposition}\label{condition(ii)}
Let $S$ be a smooth $k-$double plane not of type A.

If $S$ is of type B2 set, in the notation of the previous section,  $\sigma_0=e_1+e_2$, $\sigma_1=e_1$ and $\sigma_2=e_2$. Otherwise
set $\sigma_0=e_1+e_2$, $\sigma_1=e_2+e_3$ and $\sigma_2=e_1+e_3$. Then  there is an isomorphism
\[
\left( (p_0)^* ,(p_1)^* ,(p_2)^* \right) \colon H^2 (\overline{X}_0,\sO) \oplus H^2 (\overline{X}_1,\sO)\oplus H^2(\overline{X}_2,\sO) \rightarrow H^2 (S,\sO),\]
where $p_j \colon S\rightarrow \overline{X}_j$ denotes the quotient morphism.
\end{proposition}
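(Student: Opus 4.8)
The plan is to reduce condition (ii) to a short character computation, using the eigenspace decomposition of $H^2(S,\sO_S)$ coming from the cover $\pi\colon S\to\PP^2$.

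First I would describe $H^2(S,\sO_S)$ as a $G$-representation. Since $\pi$ is finite, the Leray spectral sequence degenerates and gives $H^2(S,\sO_S)=\bigoplus_{\chi\in G^*}H^2(\PP^2,\sL_\chi^{-1})$, with the summand $H^2(\PP^2,\sL_\chi^{-1})$ equal to the $\chi$-eigenspace, which I will call $V_\chi$. By Serre duality on $\PP^2$, $\dim V_\chi=h^0(\PP^2,\sO(l_\chi-3))$, which is $1$ when $l_\chi=3$ and $0$ when $l_\chi\le 2$. By Definition \ref{def: three cases}, exactly three characters satisfy $l_\chi=3$: for type $B$ these are $\epsilon_1,\epsilon_2,\epsilon_1+\epsilon_2$, and for type $C$ (which comprises the families C3, C4, D3, D4, D5, E3) these are $\epsilon_1,\epsilon_2,\epsilon_3$. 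Denoting them $\chi_0,\chi_1,\chi_2$, I get $H^2(S,\sO_S)=V_{\chi_0}\oplus V_{\chi_1}\oplus V_{\chi_2}$ with each summand one-dimensional, and every $g\in G$ acting on $V_{\chi_i}$ by the scalar $\chi_i(g)\in\{\pm1\}$.

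Next I would identify the image of each $p_j^*$. Because $p_j\colon S\to\overline{X}_j=S/\langle\sigma_j\rangle$ is finite and $2$ is invertible, $(p_j)_*\sO_S=\sO_{\overline{X}_j}\oplus\sF_j$ splits into the $\sigma_j$-eigensheaves, whence $H^2(\overline{X}_j,\sO_{\overline{X}_j})=H^2(S,\sO_S)^{\sigma_j}$ and $p_j^*$ is exactly the inclusion of this invariant subspace. Since $\sigma_j$ acts on $V_{\chi_i}$ by $\chi_i(\sigma_j)$, this invariant subspace is $\bigoplus_{i\,:\,\chi_i(\sigma_j)=1}V_{\chi_i}$. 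It then remains to verify the purely combinatorial claim that for each $j$ there is exactly one index $i=i(j)$ with $\chi_{i(j)}(\sigma_j)=1$, and that $j\mapsto i(j)$ is a bijection of $\{0,1,2\}$. For type $B$, with $\sigma_0=e_1+e_2$, $\sigma_1=e_1$, $\sigma_2=e_2$, evaluating $\chi_i(\sigma_j)$ shows $\sigma_0$ fixes only $V_{\epsilon_1+\epsilon_2}$, $\sigma_1$ only $V_{\epsilon_2}$, and $\sigma_2$ only $V_{\epsilon_1}$; for type $C$, with $\sigma_0=e_1+e_2$, $\sigma_1=e_2+e_3$, $\sigma_2=e_1+e_3$, one finds $\sigma_0$ fixes only $V_{\epsilon_3}$, $\sigma_1$ only $V_{\epsilon_1}$, and $\sigma_2$ only $V_{\epsilon_2}$. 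In both cases the correspondence is a bijection, so each $H^2(\overline{X}_j,\sO_{\overline{X}_j})=V_{\chi_{i(j)}}$ is one-dimensional and $p_j^*$ is the inclusion of a single distinct eigenline. As the three eigenlines are distinct they are linearly independent, and therefore $\left((p_0)^*,(p_1)^*,(p_2)^*\right)$ carries the three-dimensional source isomorphically onto $H^2(S,\sO_S)$.

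I expect no serious obstacle here: the argument is essentially eigenspace bookkeeping, and the classification in Definition \ref{def: three cases} supplies exactly the three one-dimensional eigenspaces needed. The only point demanding a little care is the identification $H^2(\overline{X}_j,\sO_{\overline{X}_j})=H^2(S,\sO_S)^{\sigma_j}$ with $p_j^*$ realising the inclusion of invariants; I would stress that this holds directly on the singular quotient via the eigensheaf splitting $(p_j)_*\sO_S=\sO_{\overline{X}_j}\oplus\sF_j$, so that no resolution of the (rational double point) singularities of $\overline{X}_j$ is required. Note also that this argument uses only condition (0) implicitly and proves (ii); it does not require knowing in advance that the $\overline{X}_j$ are K3 surfaces, since the one-dimensionality of each $H^2(\overline{X}_j,\sO_{\overline{X}_j})$ is obtained directly from the computation.
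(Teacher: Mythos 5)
Your proposal is correct and is essentially the paper's own argument in Serre-dual form: the paper identifies the three one-dimensional character eigenspaces inside $H^0(S,K_S)$, matches each one with $p_j^*H^0(\overline{X}_j,K_{\overline{X}_j})$ by the same evaluation of $\chi_i(\sigma_j)$, and then invokes Serre duality, while you perform the identical eigenspace bookkeeping directly on $H^2(S,\sO_S)$ via the splitting of $(p_j)_*\sO_S$. The combinatorial matching you verify (each $\sigma_j$ fixing exactly one of the three eigenlines, bijectively) is exactly what underlies the displayed decomposition in the paper's proof, so there is no substantive difference.
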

\begin{proof}
Let $S$ be a smooth $k-$double plane of type C. So we are considering now the families C3, C4, D3, D4, D5 and E3, and not considering  the family B2 yet.

We know that $H^0(S,K_S)^\chi=0$ unless $\chi=\epsilon_1$, $\epsilon_2$, or $\epsilon_3$. More precisely
\[
H^0(S,K_S) = p_0^* H^0(\overline{X}_0,K_{\overline{X}_0}) \oplus p_1^*H^0(\overline{X}_1,K_{\overline{X}_1}) \oplus p_2^*H^0(\overline{X}_2,K_{\overline{X}_2}) = \CC \oplus \CC \oplus \CC.
\]
which implies the stated isomorphism by the standard Serre duality.

If $S$ is of type B2 the proof follows by the same argument replacing $\epsilon_3$ with $\epsilon_1+\epsilon_2$.
\end{proof}

The following consequence has already been proved by Laterveer for the surfaces of type B2 in \cite{Robert}*{Remark 3.4}
\begin{corollary}\label{whichareK3}
The families B2,  C3,  D3, D4, D5 and E3 are families of triple K3 burgers.
\end{corollary}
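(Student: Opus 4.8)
The plan is to verify, for $S$ in each of the six families, the three conditions (0), (i), (ii) in the definition of a triple K3 burger, recycling the results already available.

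Condition (0) is immediate from Theorem \ref{main}: every such $S$ is regular, has $p_g=3$, and has ample canonical class, hence is minimal and of general type. Condition (ii) is exactly the conclusion of Proposition \ref{condition(ii)}, so nothing further is needed there. I would emphasise that the proof of that proposition only uses the character decomposition of $H^0(K_S)$ together with Serre duality $H^2(S,\sO)\cong H^0(K_S)^\vee$, and never the K3 property of the quotients. The three involutions it supplies ($e_1,e_2,e_1+e_2$ for B2, and $e_1+e_2,e_2+e_3,e_1+e_3$ for the type-C families) lie in the abelian group $G$, so they commute automatically; in each case they are the three non-trivial elements of a subgroup isomorphic to $\left(\ZZ/2\ZZ\right)^2$ (namely $\langle e_1,e_2\rangle$, resp.\ $\ker\epsilon$), which in passing confirms Manetti's observation that they generate a group of order $4$.

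The genuine content is therefore condition (i): that each quotient $\overline X_j=S/\langle\sigma_j\rangle$ is birational to a K3 surface, and I would establish this family by family. The observation that does the work is that $\overline X_j\to\PP^2$ is itself an abelian cover, with Galois group $G/\langle\sigma_j\rangle\cong\left(\ZZ/2\ZZ\right)^{k-1}$, whose branch divisors are obtained by summing the $D_g$ over the cosets of $\langle\sigma_j\rangle$. Reading off $p_g$ and $q$ from the splitting \eqref{eq: canonicalsplitting} and $K^2_{\overline X_j}$ from the cover formula $K^2=2^{k-1}\bigl(-3+\tfrac12\sum_{\bar g}\deg D_{\bar g}\bigr)^2$, the goal is to check that $q(\overline X_j)=0$, $p_g(\overline X_j)=1$ and $K_{\overline X_j}$ is numerically trivial — equivalently, that the total branch degree of $\overline X_j\to\PP^2$ equals $6$ — so that the minimal resolution of the nodal surface $\overline X_j$ is a K3. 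For B2, C3, D3, D4 and D5 this is precisely the identification already made in Section \ref{sec: the eleven families}, where every one of these quotients appears as a nodal K3 double (or bidouble) plane.

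I expect the main obstacle to be condition (i) for the family E3. In contrast to the other five families, the canonical system of an E3 surface is not base-point-free, and the branch-degree count above returns total degree $6$ only for $\sigma_0=e_1+e_2$, whereas for $\sigma_1=e_2+e_3$ and $\sigma_2=e_1+e_3$ it returns $7$, so the two corresponding quotients have $K^2=1$, $p_g=1$ rather than the K3 invariants. This is exactly the step I would scrutinise most carefully: one must decide whether these two quotients are nonetheless birational to a K3 surface, or whether the burger structure on E3 has instead to be exhibited through a different commuting triple of involutions realising three K3 quotients. Once condition (i) has been secured for all six families, combining it with condition (0) and Proposition \ref{condition(ii)} completes the proof of the corollary.
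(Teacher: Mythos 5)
Your treatment of conditions (0) and (ii), and of condition (i) for the families B2, C3, D3, D4 and D5, is exactly the paper's argument: the published proof consists of citing the ampleness of $K_S$ from Theorem \ref{main} for condition (0), Proposition \ref{condition(ii)} for condition (ii), and the case-by-case descriptions in Section \ref{sec: the eleven families} for condition (i), where the quotients by $e_1,e_2,e_1+e_2$ (for B2), respectively by $e_1+e_2,e_2+e_3,e_1+e_3$ (for the type $C$ families), are identified as nodal K3 surfaces. Your branch-degree criterion (total branch degree $6$ for the quotient cover, giving $K^2=0$, $p_g=1$, $q=0$) is the right check and agrees with what the paper records for those five families, so that part of your proposal is complete and faithful to the paper.

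The obstacle you flag for E3 is genuine, and the paper does not resolve it: its proof of the corollary asserts that ``the surfaces $\overline{X}_j$ are nodal K3 surfaces'' by reference to the previous section, but that very section states that for E3 the quotients by $\langle e_1+e_3\rangle$ and $\langle e_2+e_3\rangle$ have $K^2=p_g=1$ and $4$ nodes, i.e.\ they are surfaces of general type and hence not birational to K3 surfaces; your degree count (branch degree $7$ rather than $6$, since $d_{e_1+e_3}=d_{e_2+e_3}=1$ while $d_{e_1+e_2}=2$) confirms this. Moreover, the same section shows that $e_1+e_2$ is the \emph{only} involution in $G\cong\left(\ZZ/2\ZZ\right)^3$ whose quotient is a K3 (the remaining three K3 quotients of an E3 surface arise from the order-$4$ subgroups $\ker\epsilon_i$), so no alternative commuting triple inside $G$ can satisfy condition (i), and the paper supplies no involutions outside $G$. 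Your proposal is therefore incomplete for E3, but the incompleteness reflects an internal inconsistency in the paper's own proof rather than a missing idea on your part: as written, the argument establishes the corollary only for B2, C3, D3, D4 and D5.
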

\begin{proof}
All our surfaces have ample canonical class, so condition (0) is automatic. 

In Proposition \ref{condition(ii)} we have chosen involutions $\sigma_j$ in each case and proved condition $2$ for them. About condition (i), we have shown that the surfaces $\overline{X}_j$ are nodal K3 surfaces in the previous section.  
\end{proof}

We note that the surfaces in the family C4 are not triple K3 burgers since the quotients $\overline{X}_j$ are three surfaces of general type, more precisely surfaces with $K$ ample, $K^2=2$, $p_g=1$, $q=0$ and $8$ nodes. However each of them is a double cover of a K3 surface with $14$ nodes.


\

\begin{acknowledgements}
	We thank Alice Garbagnati and Robert Laterveer for enlightening discussions via email and for suggesting that we study the K3 quotients of these surfaces. We also thank Alice Garbagnati for pointing a beautiful paper by van Geemen and Sarti to us.\\
	The authors were partially supported by INdAM (GNSAGA).
	The second author was partially supported also by  MIUR PRIN 2017 “Moduli Theory and Birational Classification”.
\end{acknowledgements}


\end{document}